\newcommand{\Z}{{\mathbb{Z}}}
\newcommand{\Q}{{\mathbb{Q}}}
\newcommand{\OO}{{\mathcal{O}}}
\DeclareMathOperator{\tors}{tors}
\theoremstyle{plain}
\newtheorem{theorem}{Theorem}[section]
\newtheorem*{theorem*}{Theorem}
\newtheorem{proposition}[theorem]{Proposition}
\newtheorem{rem}[theorem]{Remark}
\newtheorem{lemma}[theorem]{Lemma}
\newtheorem{corollary}[theorem]{Corollary}
\newtheorem{defn}[theorem]{Definition}
\begin{document}

\title{Cyclic Cubic Extensions of $\Q$}
\maketitle

\begin{center}
Dipramit Majumdar \& B. Sury
\end{center}

\begin{abstract}
We determine the irreducible trinomials $X^3-aX+b$ for integers
$a,b$ which generate precisely all possible Galois extensions of
degree $3$ over $\Q$. The proof, although involved, is elementary
and one can parametrize all these polynomials explicitly. As an
accidental byproduct of the results, we prove that infinitely many
primes congruent to $1$ or $-1$ mod $9$ are sums of two rational
cubes - thereby, giving the first unconditional result on a
classical open problem.
\end{abstract}

\section{Introduction}

\noindent Let $K$ be a Galois extension of $\Q$ of degree $3$. Then
we can identify $K \cong \frac{\Q[X]}{(X^3-aX+b)}$ for some
irreducible polynomial $X^3-aX+b \in \Z[X]$ whose discriminant
$4a^3-27b^2$ is a perfect square.\footnote{Mathematics Subject
classification:
11D41; 12F05\\
Key words:  Cubic Galois extensions; Level-raising and lowering
maps; Diophantine equations; Primes as sums of two cubes}

\noindent {\it The aim of this article is to explicitly describe the
irreducible trinomials which give all the cubic Galois extensions of $\Q$.
This is done in the main theorem 4.6 at the end.} It is surprising that this seems to have been not done before.\\

\noindent A classical open problem asks for a classification of all cube free
natural numbers which can be expressed as sums of cubes of two
rational numbers. {\it As an accidental by-product of our main result, we prove that
infinitely primes congruent to $\pm 1$ modulo $9$ can be expressed
as a sum of two rational cubes.} Our proof seems to be the first unconditional one.\\

\noindent We call two polynomials $f(X)=X^3-a_1X+b_1 \in \Z[X]$ and
$g(X)=X^3-a_2X+b_2 \in \Z[X]$ to be {\bf equivalent} if there exists
a rational number $q \in \Q^\ast$ such that $a_2 = q^2a_1$ and
$b_2=q^3b_1$. Note that $Disc(g) = q^6 Disc(f)$. The aim of this
article is to find all irreducible polynomial $f(X)=X^3-aX+b \in
\Z[X]$ (up to equivalence) whose discriminant is a perfect square.
Since we are only interested in polynomials up to equivalence, we
need to find all the irreducible trinomials $f(X)=X^3-aX+b \in
\Z[X]$ each of which satisfies the following conditions.
\begin{enumerate}
\item There exists an integer $c$ such that $Disc(f)= 4a^3-27b^2=c^2 \neq 0$.
\item  $D= \mathrm{GCD}(a,b)$ is cube-free and, for every prime number $\ell$ such that $\ell^2 \mid D$, we have $\ell^3 \nmid b$.
\end{enumerate}

\noindent We briefly explain why the study of cubic Galois
extensions of $\Q$ reduces to the study of polynomials of the form
$X^3-aX+b$ satisfying the above two conditions. Note first that by a
linear change of variables, a cubic irreducible polynomial over $\Q$
can be taken to be of the form $X^3+uX+v$ for rational $u,v$ and a
further scaling by an integer $w$, where $w$ is a common denominator
for $u$ and $v$, generates the same field with a primitive element
whose minimal polynomial is of the form $X^3-aX+b$ for integers
$a,b$. Condition (1) arises as the Galois group of a cubic
polynomial is cyclic, of order $3$, if it is contained in the
alternating group $A_3$ - which happens if and only if the
discriminant is a perfect square (\cite{M}, Corollary 12.4).
Condition (2) arises because if $\ell$ is a prime such that $\ell^2
\mid GCD(a,b)$ and $\ell^3 \mid b$, then the polynomial $X^3-aX+b$
is equivalent to the polynomial $X^3-(a/\ell^2)X + (b/\ell^3)$.\\

\noindent  We write $(a,b)$ for the GCD of $a,b$. Let us note that
the determination of cubic trinomials $f(X)$ whose discriminant is a
perfect square reduces to integral solutions $( \sqrt{Disc(f)} , b ,
a)$ of $x^2 + 27 y^2 = 4z^3$.\\ Let $X_1$ denotes the affine curve
$x^2+27y^2=4z^3$ and let
$$X^D_1(\Z)= \{ (x,y,z) \in \Z^3 \mid
x^2+27y^2=4z^3 , xyz \neq 0 , (y,z)=D \}.$$ For a cube-free natural
number $D$, in view of property (2) above, we define
$$X_1^D(\Z)^* = \{(x,y,z) \in X_1^D(\Z): \ell^2|D \Rightarrow \ell^3 \not|
y~~\forall~~ {\rm prime}~~\ell  \}.$$ We observe that if $D$ is
square-free, then $X^D_1(\Z)=X^D_1(\Z)^*$. Note that $(x,y,z) \in
X^D_1(\Z)^*$ gives us a trinomial $X^3-zX+y$ which satisfies
conditions (1) and (2). As we vary $(x,y,z) \in X^D_1(\Z)^*$ for all
cube-free natural numbers $D$, the irreducible trinomials $X^3-zX+y$
give us all irreducible trinomials in $\Z[X]$ (up to equivalence)
whose discriminant is a perfect square. Thus, we start by
understanding the sets $X^D_1(\Z)^*$.\\
We say a cube-free natural number $D$ is {\bf admissible} if
$X^D_1(\Z)^*$ is non-empty. We will  show that $D$ is admissible if,
and only if, $D = D_1$ or $9D_1$ where either $D_1=1$ or each prime
factor
of $D_1$ is congruent to $1$ mod $3$.\\
So, we restrict our study to $X^D_1(\Z)^*$ for admissible $D$. \\
Observing that any solution $(x,y,z) \in X^D_1(\Z)$ gives us a
solution $(X,Y,Z)$ of $X^2 + 27Y^2=4DZ^3$ with $(Y,Z)=1$ (where
$x=DX, y=DY, z = DZ$), we let $X_D$ denote the affine curve $X^2 +
27Y^2=4DZ^3$ for any admissible $D$. We think $X_D$ as `level
curves' and define
$$X_D^1(\Z) =
\{(x,y,z) \in \Z^3: xyz \neq 0, x^2+27y^2=4Dz^3, (y,z)=1 \}.$$ Then
we have a bijection from $X^D_1(\Z) \to X^1_D(\Z)$ given by $(x,y,z)
\mapsto (\frac{x}{D}, \frac{y}{D}, \frac{z}{D})$. Moreover, this map
induces a bijection from $X^D_1(\Z)^*$ to $X^1_D(\Z)^*$, where
$$X_D^1(\Z)^* = \{(x,y,z) \in X_D^1(\Z): \ell^2|D \Rightarrow \ell \not|
y~~\forall~~ {\rm prime}~~\ell  \}.$$
We study the sets $X^1_D(\Z)^*$; note that $X^1_D(\Z)=X^1_D(\Z)^*$ if $D$ is square-free.  \\

\noindent  We observe that, for an admissible $D$ with $3 \nmid D$,
the solutions of $X^2 + 27Y^2=4DZ^3$ are related in a many-to-one
fashion with those of $X^2 + 3Y^2=4DZ^3$ - here, certain subtleties
arise as follows.\\
When $(x,y,z)$ is a solution to the first equation where $3 \mid z$, there are two solutions $(x,3y,z)$ and $(y, x/9, z/3)$ for the latter equation. \\
Also, if $(x,y,z)$ is a solution to the latter equation and $3 \mid y$, we get two solutions $(x,y/3,z)$ and $(9y,x,3z)$ of the former equation. \\
Let $Y_D$ denote the affine curve $X^2 + 3Y^2=4DZ^3$. Similar to the
case of the affine curve $X_D$, we define
$$Y_D^1(\Z) = \{(x,y,z) \in \Z^3: xyz \neq 0, x^2+3y^2=4Dz^3, (y,z)=1 \};$$
$$Y_D^1(\Z)^* = \{(x,y,z) \in Y_D^1(\Z): \ell^2|D \Rightarrow \ell \not| y~~\forall~~ {\rm prime}~~\ell  \}.$$
\vskip 3mm

\noindent  In Section 2, for admissible $D$ with $3 \nmid D$, we
obtain $X^1_D(\Z)^*$ in terms of $Y^1_D(\Z)^*$. Also, for an
admissible integer of the form $9D$, in section 2, we give a
bijection between $Y^1_D(\Z)$ and $X^1_{9D}(\Z)$. As a consequence,
this enables us to identify $X^1_{9D}(\Z)^*$ with a certain subset
of $Y^1_D(\Z)^*$. Therefore, we need to study $Y^1_D(\Z)^*$ for
admissible integers $D$
with $3 \nmid D$.\\

\noindent  In the study of $Y^1_D(\Z)^*$, the case $D=1$ is easy to
deal with. The solutions are obtained explicitly by reducing the
equation $X^2+3Y^2=4Z^3$ to the equation $x^2-3xy+3y^2=z^3$ that is
quickly solved using the arithmetic of the ring
$\mathbb{Z}[\omega]$, where $\omega$ is a primitive third root of
unity. A detailed proof of this can be found in
\cite{Cohen2}[Proposition 14.2.1(2)]. \vskip 3mm

\noindent  In Section 3, we construct $Y^1_D(\Z)^*$ from $Y^1_1(\Z)$
using maps between the integral points of curves of the form $X^2 +
3Y^2 = 4DZ^3$ for varying $D$ and keeping track of the GCDs of $Y$
and $Z$. These maps roughly ``trade off" the GCD of $(Y,Z)$ with a
coefficient of the $Z^3$ term. Informally, we call these fundamental
maps `level-raising' and `level-lowering'. As the book-keeping is
somewhat involved, it is convenient to define and study the maps
abstractly. \\

\noindent  Keeping track of the bookkeeping in Sections 2 and 3, we
determine in Section 4, all the irreducible trinomials (up
to equivalence) whose discriminant is a perfect square.

\noindent In Section 5, we relate integers expressible as sum of
rational cubes with integral solutions of $x^2+27y^2=4z^3$ and as a
consequence we prove that infinitely many primes congruent to $\pm 1$
modulo $9$ can be expressed as a sum of two rational cubes.\\

\noindent To summarize the flow of the paper, we make a few remarks.
The set of solutions of $x^2 + 27 y^2 = 4z^3$ with a given $(y,z)=D$
is connected naturally to the study of solutions of $x^2 + 3y^2 =
4z^3$ with the same $(y,z)=D$ (in a slightly subtle way - see Lemma
2.10). The main work consists of constructing the relevant part of
the set of solutions of $x^2+3y^2 = 4z^3$ with gcd$(y,z)=D$ (denoted
by $Y_D^1(\Z)^*$) from the set of solutions of $x^2+3y^2 =4z^3$ with
gcd$(y,z)=1$, using level-changing maps. It seems to us that if we
directly perform the various level-changing transformations within
the sets of solutions of $x^2+27y^2 = 4z^3$ with various $(y,z)$'s,
and try to bypass the equation $x^2 + 3y^2 = 4z^3$, it is
artificial, and we are not able to ensure that all points are
obtained. Hence, we are led to considering the level sets
$Y_D^1(\Z)^*$. \vskip 2mm

\noindent {\it The theorem 4.6 gives a complete list of
irreducible trinomials which generate all possible cubic Galois
extensions of $\Q$.} \vskip 3mm

\noindent  For convenience, we put down here a summary of notation
that will appear often in this paper. \vskip 2mm

\noindent $\bullet$ $X_1^D(\Z) = \{(x,y,z) \in \Z^3: xyz \neq 0,
x^2+27y^2=4z^3, (y,z)=D \}$.\\

\noindent $\bullet$ $X_1^D(\Z)^* = \{(x,y,z) \in X_1^D(\Z): \ell^2|D \Rightarrow \ell^3 \nmid y~~\forall~~ {\rm prime}~~\ell  \}$.\\

\noindent $\bullet$ $X_D^1(\Z) = \{(x,y,z) \in \Z^3: xyz \neq 0,
x^2+27y^2=4Dz^3, (y,z)=1 \}$.\\

\noindent $\bullet$ $X_D^1(\Z)^* = \{(x,y,z) \in X_D^1(\Z): \ell^2|D \Rightarrow \ell \nmid y~~\forall~~ {\rm prime}~~\ell  \}$.\\

\noindent $\bullet$ $Y_D^1(\Z) = \{(x,y,z) \in \Z^3: xyz \neq 0,
x^2+3y^2=4Dz^3, (y,z)=1 \}$.\\

\noindent $\bullet$ $Y_D^1(\Z)^* = \{(x,y,z) \in Y_D^1(\Z): \ell^2|D \Rightarrow \ell \nmid y~~\forall~~ {\rm prime}~~\ell  \}$.\\

\noindent The idea is to define `level-changing' maps between these
sets and determine the sets $X_1^D(\Z)^* = \{(x,y,z) \in X_1^D(\Z):
\ell^2|D \Rightarrow \ell^3 \nmid y~~\forall~~ {\rm prime}~~\ell \}$
from $Y_1^1(\Z) = \{(x,y,z) \in \Z^3: xyz \neq 0, x^2+3y^2=4z^3,
(y,z)=1 \}$; the latter set can be written down explicitly. \vskip
5mm

\section{Construction of $X^1_1(\Z)$ and $X^9_1(\Z)^*$}

Let $X_1$ be the affine curve $X^2+27Y^2=4Z^3$ as defined in the introduction. We define the set of trivial integral zeroes of $X_1$ by
$$X_1^{\text{triv}}(\Z) = \{ (x_0,y_0,z_0) \in X_1(\Z) | x_0y_0z_0 =0 \}=\{(0,\pm 2t^3, 3t^2), (\pm 2t^3,0,t^2) \mid t \in \Z\}.$$
Note that if $(x_0,y_0,z_0) \in X_1^{\text{triv}}(\Z)$, then $X^3-z_0
X + y_0$ is reducible. As we are interested in irreducible
trinomials (up to equivalence) with perfect square discriminant, we
study the sets $X_1^D(\Z)^* = \{(x,y,z) \in X_1^D(\Z): \ell^2|D
\Rightarrow \ell^3 \not| y~~\forall~~ {\rm prime}~~\ell  \}$ as $D$
vary over cube-free integers.\\

\noindent We first find all cube-free integers $D$ for which the set
$X^D_1(\Z)^*$ can possibly be non-empty. Towards that, we recall the
following elementary fact. We include a proof here.

\begin{lemma}\label{3k+1}
 Let $p \geq 5$ be an odd prime. Then there exists integer $u$ and $v$ such that  $p=u^2+3v^2$ if and
only if, $p \equiv  1 \pmod{3}$. Also,  $u$ and $v$ above are unique
up to sign, and $(u,3v)=1$. Moreover, any prime $p \equiv 1$ mod $3$
is expressible as $s^2-st+t^2$ for positive integers $s,t$ and the
expression $p=u^2+3v^2$ has the unique positive solution $u = s +
\frac{t}{2}, v = \frac{t}{2}$ when $s$ is odd and $t$ is even and $u
= \frac{s+t}{2}, v = \frac{|s-t|}{2}$ when $s,t$ are odd. Further,
if an integer $N$ is expressible in the form $a^2+3b^2$ with
$(a,3b)=1$, then its odd prime factors are of the form $3k+1$.
\end{lemma}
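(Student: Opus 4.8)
The plan is to route everything through the arithmetic of the Eisenstein integers $\Z[\omega]$, where $\omega=\frac{-1+\sqrt{-3}}{2}$ is a primitive cube root of unity, exploiting that $\Z[\omega]$ is a PID with the six units $\{\pm1,\pm\omega,\pm\omega^2\}$ and norm $N(s+t\omega)=s^2-st+t^2$. The single external ingredient I would invoke is the standard Legendre-symbol computation $\left(\frac{-3}{q}\right)=1$ if and only if $q\equiv 1\pmod 3$ for primes $q\geq 5$; I would prove this in one line by noting that $-3$ is a square mod $q$ exactly when $X^2+X+1$ has a root in $\mathbb{F}_q$, i.e. when $\mathbb{F}_q^\ast$ contains an element of order $3$, i.e. when $3\mid q-1$.

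For the equivalence in the first sentence, the direction $(\Rightarrow)$ is immediate: reducing $p=u^2+3v^2$ modulo $3$ gives $p\equiv u^2\pmod 3$, and since $p\neq 3$ forces $3\nmid u$, we get $p\equiv 1\pmod 3$. For $(\Leftarrow)$, assuming $p\equiv1\pmod3$ the Legendre fact produces $c$ with $p\mid c^2+c+1=N(c-\omega)$ while $p\nmid (c-\omega)$ in $\Z[\omega]$; hence $p$ is not prime in the PID $\Z[\omega]$ and factors as $p=\pi\overline{\pi}$ with $N(\pi)=p$. Writing $\pi=s+t\omega$ yields $p=s^2-st+t^2$ with $s,t$ not both even (as $p$ is odd), and completing the square, $s^2-st+t^2=(s-\tfrac{t}{2})^2+3(\tfrac{t}{2})^2$ in the $t$-even case and symmetrically in the $s$-even case, converts this into the shape $u^2+3v^2$; multiplying $\pi$ by a suitable unit $\pm\omega^k$ I can arrange $s,t>0$, which also gives the representation $p=s^2-st+t^2$ with positive $s,t$ asserted later in the lemma. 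The coprimality $(u,3v)=1$ and $uv\neq0$ then follow formally: a common prime factor of $u,v$ (respectively the prime $3$ dividing $u$) would force its square to divide $p$ (respectively $3\mid p$), both impossible.

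For uniqueness up to sign I would argue inside $\Z[\omega]$: a representation $p=u^2+3v^2$ corresponds to the element $u+v\sqrt{-3}=(u+v)+2v\omega\in\Z[\omega]$ of norm $p$, and every element of norm $p$ is a unit multiple of $\pi$ or $\overline{\pi}$. The point is that among the six associates of $\pi$ exactly one has even $\omega$-coefficient (a short parity check on the coefficients $b,\,a-b,\,-a$ of $\pi,\omega\pi,\omega^2\pi$, using that $p$ is odd), and likewise for $\overline{\pi}$; these four special elements are precisely $\pm(u\pm v\sqrt{-3})$, pinning down the pair $(|u|,|v|)$ uniquely. The explicit conversion formulas are then read off from the completing-the-square identity in each parity case ($t$ even versus $s,t$ both odd, where one instead writes $s^2-st+t^2=\big(\tfrac{s+t}{2}\big)^2+3\big(\tfrac{s-t}{2}\big)^2$); this bookkeeping of parities and signs is the step I expect to be the most delicate, since the pair $(s,t)$ realizing a given $p$ is itself only determined up to the unit action.

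Finally, for the last assertion let $q$ be an odd prime dividing $N=a^2+3b^2$ with $(a,3b)=1$. Then $q\neq3$ (else $3\mid a^2$ forces $3\mid a$, contradicting $(a,3)=1$) and $q\nmid b$ (else $q\mid a^2$ forces $q\mid a$, contradicting $(a,b)=1$). Hence $b$ is invertible modulo $q$ and $a^2\equiv-3b^2\pmod q$ shows that $-3$ is a square mod $q$, so by the Legendre fact $q\equiv1\pmod3$, as claimed.
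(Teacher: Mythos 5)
Your proposal is correct, and it is worth recording where it tracks the paper and where it genuinely departs. The existence half is essentially the paper's argument: both of you produce an element of order $3$ in $\mathbf{F}_p^{\ast}$, conclude that $p$ divides a norm ($a^2+a+1$ in the paper, $N(c-\omega)$ for you) without dividing the factor, split $p=\pi\bar{\pi}$ in $\Z[\omega]$, and convert $p=s^2-st+t^2$ into $u^2+3v^2$ by the same two parity cases. After that you diverge. For uniqueness the paper leaves $\Z[\omega]$ entirely and runs an elementary Brahmagupta-style computation: from $p=u_1^2+3v_1^2=u_2^2+3v_2^2$ it gets $p\mid u_1^2v_2^2-v_1^2u_2^2$ and then uses the composition identity $1=\big(\frac{u_1u_2+3v_1v_2}{p}\big)^2+3\big(\frac{u_1v_2-v_1u_2}{p}\big)^2$ to force $u_1v_2=v_1u_2$, hence $(u_1,v_1)=\pm(u_2,v_2)$. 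You instead classify all elements of norm $p$ as associates of $\pi$ or $\bar{\pi}$ and use the parity check on the $\omega$-coefficients $b,\,a-b,\,-a$ to isolate those lying in $\Z[\sqrt{-3}]$; this is valid, though strictly it is one $\pm$-pair per conjugate class, not ``exactly one'' element --- your count of four elements $\pm(u\pm v\sqrt{-3})$ shows you mean the right thing. Your version buys a structural explanation of the ambiguity the lemma statement quietly carries (the $(s,t)$-parameters are determined only up to the unit action, while $(|u|,|v|)$ is rigid); the paper's version buys independence from unique factorization for this step. For the final assertion the paper deduces $p\equiv 1\pmod{3}$ from $\big(\frac{-3}{p}\big)=1$ by invoking quadratic reciprocity, whereas you prove the needed equivalence directly ($-3$ is a square mod $q$ iff $X^2+X+1$ has a root in $\mathbb{F}_q$ iff $3\mid q-1$), which makes your treatment self-contained and in fact reuses the same order-$3$ mechanism already present in the existence step. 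One last point in your favor: the statement's formula $u=s+\frac{t}{2}$ (case $s$ odd, $t$ even) carries a sign slip --- test $7=3^2-3\cdot 2+2^2$, where $u=2=3-1$, not $4$ --- and your completed square $s^2-st+t^2=\big(s-\frac{t}{2}\big)^2+3\big(\frac{t}{2}\big)^2$ is the correct identity, agreeing with the paper's own proof, which derives $p=(a-b/2)^2+3(b/2)^2$.
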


\begin{proof}
Clearly, if $p \neq 3$ is of the form $x^2+3y^2$, then it is $1$ mod
$3$. Conversely, let $p \equiv 1$ mod $3$ be a prime. As $3$ divides
$|\mathbf{F}_p^{\ast}|$ there exists an element of order $3$ in the
cyclic group $\mathbf{F}_p^{\ast}$. That is, there exists an integer
$a \not\equiv 1$ mod $p$ but $a^3 \equiv 1$ mod $p$. Thus, $p$
divides $a^2+a+1$ so that $p$ divides $|-a+\omega|^2$ where $\omega$
is a primitive cube root of unity. Clearly, $p$ is not irreducible
(as it is not prime) in the unique factorization domain
$\mathbf{Z}[\omega]$. Hence $p = (a+b \omega)(c+d \omega)$ with each
factor a non-unit, which gives $p = |a+b \omega|^2 = a^2-ab+b^2$.
Therefore, either $a,b$ are both odd or one of them (say $a$) is odd
and the other even, In the first case, $p =
((a+b)/2)^2+3((a-b)/2)^2$ and, in the 2nd case, $p =
(a-b/2)^2+3(b/2)^2$. This completes the proof of the first statement.
The fact that $(u,3v)=1$ is obvious. \\
To prove uniqueness (up to sign), let $p=u_1^2+3v_1^2 = u_2^2+
3v_2^2$. Now,
$$(u_1v_2 -v_1u_2)(u_1v_2 + v_1u_2) = u_1^2v_2^2 - v_1^2u_2^2 \equiv -v_1^2(u_2^2 + 3v_2^2) \equiv 0 \pmod{p}.$$
Thus, either $p \mid (u_1v_2 -v_1u_2)$ or $p \mid (u_1v_2 + v_1u_2)$.\\
If $p \mid (u_1v_2 -v_1u_2)$, then $p \mid (u_1(u_1v_2 -v_1u_2) -
pv_2)$; that is, $p \mid -v_1(u_1u_2+3v_1v_2)$. Hence $p \mid
(u_1u_2 + 3v_1v_2)$ as $(p,v_1)=1$. Since
$$1 = \big( \frac{u_1u_2 + 3v_1v_2}{p} \big)^2 + 3 \big( \frac{u_1v_2 - v_1u_2}{p} \big)^2,$$ we get
$\frac{u_1v_2 - v_1u_2}{p}=0$ and $\frac{u_1u_2 + 3v_1v_2}{p} = \pm
1$. Thus, $u_1v_2 = u_2v_1$ and $u_1u_2 + 3v_1v_2 = \pm{p}$.
So, $\pm{p}u_1 = u_1^2u_2 + 3v_1u_1v_2= (u_1^2+3v_1^2)u_2 = pu_2$.\\
We conclude that $u_1 = \pm u_2$ and $v_1 = \pm v_2$. \\
The case $p \mid (u_1v_2 +v_1u_2)$ is similar. This concludes the
proof of uniqueness of $u,v$ in the
expression of a prime $p = u^2+3v^2$ (up to sign).\\
Finally, let $N=a^2+3b^2$ with $(a,3b)=1$. Let $p$ be an odd prime
such that $p \mid N$. Since $(a,3b)=1$, we have $p \neq 3$ and $p
\nmid ab$. since $a^2+3b^2 \equiv 0 \pmod{p}$, we see that $-3
\equiv \big( \frac{a}{b} \big)^2 \pmod{p}$, that is $\big(
\frac{-3}{p} \big) =1$. From the quadratic reciprocity law, it
follows that $p \equiv 1 \pmod{3}$.
\end{proof}

\begin{proposition}\label{improperD}
Let $D$ be a cube free integer. Consider the set $X_1^D(\Z)^* =
\{(x,y,z) \in X_1^D(\Z): \ell^2|D \Rightarrow \ell^3 \nmid
y~~\forall~~ {\rm prime}~~\ell  \}$ where $X_1^D(\Z) = \{(x,y,z) \in
\Z^3: xyz \neq 0, x^2+27y^2=4z^3, (y,z)=D \}$. If a prime $\ell
\equiv 2 \pmod{3}$ divides $D$, then $X^D_1(\Z)^*$ is empty.
\end{proposition}

\begin{proof}
Let $(x_0,y_0,z_0) \in X^D_1(\Z)^*$. We divide the proof in two cases.

\underline{{\bf Case I: $\ell =2.$}}

Write $y_0 = 2 y_1, z_0 = 2 z_1$, this implies $2 \mid x_0$. Write $x_0 = 2 x_1$. Simplifying we get,
$$x_1^2+ 27y_1^2 = 8 z_1^3.$$
Thus $x_1^2 + 27 y_1^2 \equiv 0 \pmod{8}$. For any integer $a$, we have $a^2 \equiv 0, 1, 4 \pmod{8}$. Thus this implies either $x_1 \equiv y_1 \equiv 0 \pmod{4}$ or  $x_1 \equiv y_1 \equiv 2 \pmod{4}$.

In the first case, $4 \mid x_1$ and $4 \mid y_1$ implies $2 \mid
z_1$. Thus, $4 \mid (y_0,z_0)=D$ and $8 \mid y_0$ which implies
$(x_0,y_0,z_0) \notin X^D_1(\Z)^*$.

In the second case, writing $x_1 = 2(2r+1)$ and $y_1= 2(2s+1)$, we obtain
$$(2r+1)^2 + 27(2s+1)^2 = 2 z_1^3.$$
Since the LHS is divisible by $4$, this implies $z_1$ is even. Writing $z_1 =2t$ we obtain
$$(2r+1)^2 + 27(2s+1)^2 = 16 t^3,$$
which is not possible, as RHS $ \equiv  0 \pmod{8}$ but LHS $\equiv
4 \pmod{8}$. \vskip 2mm

\underline{{\bf Case II: $2< \ell \equiv 2\pmod{3}.$}}

The proof of Lemma \ref{3k+1} implies that $x^2+3y^2=0$ has no
solution other than $(0,0) \in \mathbb{F}_{\ell}^2$.

Suppose that $X^2+27Y^2=4Z^3$ has a non-trivial solution
$(x_0,y_0,z_0)$ with GCD $(y_0,z_0)=D$. If $\ell \mid y_0$ and $\ell
\mid z_0$, then $\ell \mid x_0$. Writing $x_0 = \ell x_1, y_0 = \ell
y_1, z_0= \ell z_1$ and simplifying we see
$$ x_1^2 + 27y_1^2 = 4\ell z_1^3 \text{ hence } x_1^2 + 3 (3y_1)^2 =0 \pmod{\ell}.$$
Hence $(x_1, 3y_1) = (0,0) \in \mathbb{F}_{\ell}^2$, that is $\ell \mid x_1$ and $\ell \mid y_1$. This implies $\ell^2 \mid D$ and $\ell \mid z_1$. Writing $x_1 = \ell x_2, y_1 = \ell y_2, z_1 = \ell z_2$ and simplifying, we see that
$$ x_2^2 + 27y_2^2 = 4\ell^2 z_2^3 \text{ hence } x_2^2 + 3 (3y_2)^2 =0 \pmod{\ell}.$$
Again by a similar argument we see that $\ell \mid y_2$, hence $(x_0, y_0,z_0) \notin X^D_1(\Z)^*$.
\end{proof}

\begin{proposition}
Let $D$ be a cube free integer. If $3 \mid \mid D$, then $X^D_1(\Z)^*$ is empty.
\end{proposition}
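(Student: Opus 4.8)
The plan is to obtain a contradiction from a $3$-adic valuation count; write $\ord_3$ for the $3$-adic valuation. Suppose $(x,y,z)\in X_1^D(\Z)^*$. Since $D=(y,z)$ and $3\mid\mid D$, we have $3\mid y$, $3\mid z$ and $\min(\ord_3(y),\ord_3(z))=\ord_3(D)=1$, so at least one of $\ord_3(y),\ord_3(z)$ equals $1$. Rewriting the defining equation as $x^2=4z^3-27y^2$, the whole argument reduces to showing that $\ord_3$ of the right-hand side is \emph{odd}, which is impossible since $\ord_3(x^2)=2\ord_3(x)$ is even.

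First I would record $\ord_3(27y^2)=3+2\ord_3(y)$ and $\ord_3(4z^3)=3\ord_3(z)$, and then split into the two (possibly overlapping) cases dictated by $\min(\ord_3(y),\ord_3(z))=1$. If $\ord_3(z)=1$, then $\ord_3(4z^3)=3$ while $\ord_3(27y^2)=3+2\ord_3(y)\ge 5$; the two valuations differ, so there is no $3$-adic cancellation and $\ord_3(x^2)=\min(3,\,3+2\ord_3(y))=3$. If instead $\ord_3(y)=1$ (which forces $\ord_3(z)\ge 2$, else we are already in the first case), then $\ord_3(27y^2)=5$ and $\ord_3(4z^3)=3\ord_3(z)\ge 6$, so again the valuations differ and $\ord_3(x^2)=5$. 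In both cases $\ord_3(x^2)\in\{3,5\}$ is odd, the desired contradiction; hence no such $(x,y,z)$ exists and $X_1^D(\Z)^*=\emptyset$.

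The only point that needs care — and the place where the hypothesis $3\mid\mid D$ is actually used — is the verification that the two valuations $\ord_3(27y^2)$ and $\ord_3(4z^3)$ are genuinely distinct, so that the valuation of their difference equals the smaller of the two (no cancellation). This is automatic once $\min(\ord_3(y),\ord_3(z))=1$: the candidate minimum is then either $3$ or $5$, and one checks directly that it cannot coincide with the competing term. I would also remark that the argument never invokes the star condition, so in fact the stronger statement $X_1^D(\Z)=\emptyset$ holds whenever $3\mid\mid D$.
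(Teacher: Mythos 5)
Your proof is correct, and it takes a recognizably different route from the paper's. The paper argues by successive divisibility bootstrapping: from $3 \mid z_0$ it deduces $9 \mid x_0$, then $9 \mid z_0$, then $27 \mid x_0$, then $9 \mid y_0$, so that $9 \mid (y_0,z_0) = D$, contradicting $3 \mid\mid D$. You compress this into a single ultrametric comparison: since $\min(\ord_3(y),\ord_3(z)) = \ord_3(D) = 1$, whichever of the terms $4z^3$, $27y^2$ carries the smaller $3$-adic valuation does so strictly (that valuation being $3$ or $5$), so there is no cancellation and $\ord_3(x^2)$ is odd --- impossible, since $x \neq 0$. Both arguments are local at $3$ and both ultimately exploit the evenness of $\ord_3$ of a nonzero square (the paper uses it implicitly in steps such as ``$27 \mid x_0^2$, hence $9 \mid x_0$''), but your parity contradiction terminates in one step where the paper needs four rounds, and it makes explicit exactly where $3 \mid\mid D$ enters: it pins the minimum of $\ord_3(y),\ord_3(z)$ at $1$, which is what guarantees the two competing valuations differ. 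Your closing remark is accurate and applies equally to the paper's own proof: neither argument invokes the star condition, so both in fact establish the stronger statement $X_1^D(\Z) = \emptyset$; the paper just reaches the contradiction through the intermediate conclusion $9 \mid D$ rather than through the parity of $\ord_3(x^2)$.
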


\begin{proof}
Suppose $(x_0,y_0,z_0) \in X^D_1(\Z)^*$. We see that $ 3 \mid z_0$ implies $27 \mid x_0^2$, hence $9 \mid x_0$. Now $9 \mid x_0$ and $3 \mid y_0$ implies $81 \mid z_1^3$, hence $9 \mid z_0$. Next we see that $ 3 \mid y_0$ and $ 9 \mid z_0$ implies $3^5 \mid x_0^2$, hence $3^3 \mid x_0$. Lastly notice that $3^3 \mid x_0$ and $9 \mid z_0$  implies $ 3^6 \mid 27y_0^2$, hence $9 \mid y_0$. Thus we see that $9$ divides both $y_0$ and $z_0$, hence $9 \mid D$, which contradicts the fact that $ 3 \mid \mid  D$.
\end{proof}
\vskip 2mm

\noindent In view of the above two propositions when our set $X^D_1(\Z)^*$ is
empty, it is meaningful to consider the following integers $D$ only.
\vskip 2mm

\begin{defn}
(i) We call a natural number $D$ {\bf admissible} if it is cube free
and of the form $D_1$ or $9D_1$, where $D_1=1$ or all the prime
factors of $D_1$ are congruent to 1 modulo $3$. \\
(ii) For admissible $D$, we consider the curve $X_D:
X^2+27Y^2=4DZ^3$. As mentioned in the introduction, let
$$X_D^1(\Z) =
\{(x,y,z) \in \Z^3: xyz \neq 0, x^2+27y^2=4Dz^3, (y,z)=1 \};$$
$$X_D^1(\Z)^* = \{(x,y,z) \in X_D^1(\Z): \ell^2|D \Rightarrow \ell \not| y~~\forall~~ {\rm prime}~~\ell  \}.$$
\end{defn}
\vskip 2mm

\noindent We observe now that the sets $X_D^1(\Z)$ and $X_D^1(\Z)^*$
are in natural bijection, respectively, with the sets $X_1^D(\Z)$
and $X_1^D(\Z)^*$. \vskip 2mm

\begin{lemma}\label{thetaD}
The map
  $\theta_D : X_D^1(\Z) \to X_1^D(\Z)$
given by $(x,y,z) \mapsto (Dx,Dy,Dz)$ is a bijection and restricts
to a bijection from $X_D^1(\Z)^*$ to $X_1^D(\Z)^*$.
\end{lemma}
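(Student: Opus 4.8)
The plan is to check the four things that together yield the statement: that $\theta_D$ lands in $X_1^D(\Z)$, that it is injective, that it is surjective, and that it matches the two starred conditions. First I would verify well-definedness. Given $(x,y,z) \in X_D^1(\Z)$, the identity $(Dx)^2 + 27(Dy)^2 = D^2(x^2+27y^2) = D^2 \cdot 4Dz^3 = 4(Dz)^3$ shows the image lies on $X_1$, while $(Dx)(Dy)(Dz) = D^3 xyz \neq 0$ and $(Dy,Dz) = D(y,z) = D$ (using $D \geq 1$) show that $\theta_D(x,y,z) \in X_1^D(\Z)$. Injectivity is immediate because $D \neq 0$.

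The one step that needs an actual argument is surjectivity. Given $(X,Y,Z) \in X_1^D(\Z)$, since $(Y,Z) = D$ I would write $Y = Dy$ and $Z = Dz$ with $(y,z) = 1$, and the crux is to produce $x$ with $X = Dx$. This follows from $X^2 = 4Z^3 - 27Y^2 = D^2(4Dz^3 - 27y^2)$: this gives $D^2 \mid X^2$, and since $D^2 \mid X^2$ is equivalent to $D \mid X$ (compare $\ell$-adic valuations prime by prime), we may set $X = Dx$. Cancelling $D^2$ then yields $x^2 + 27y^2 = 4Dz^3$ with $xyz \neq 0$ and $(y,z)=1$, so $(x,y,z) \in X_D^1(\Z)$ with $\theta_D(x,y,z) = (X,Y,Z)$.

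Finally, to see that $\theta_D$ carries $X_D^1(\Z)^*$ onto $X_1^D(\Z)^*$, I would compare the two starred conditions one prime at a time. The key point is that, $D$ being cube-free, a prime $\ell$ satisfies $\ell^2 \mid D$ exactly when $v_\ell(D) = 2$; for such $\ell$ we have $v_\ell(Dy) = 2 + v_\ell(y)$, so $\ell^3 \nmid Dy \iff v_\ell(y) = 0 \iff \ell \nmid y$. Hence the condition ``$\ell^2 \mid D \Rightarrow \ell \nmid y$'' defining $X_D^1(\Z)^*$ holds for $(x,y,z)$ if and only if the condition ``$\ell^2 \mid D \Rightarrow \ell^3 \nmid Dy$'' defining $X_1^D(\Z)^*$ holds for $(Dx,Dy,Dz)$, so the bijection restricts as claimed. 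I do not expect a genuine obstacle; the only points requiring care are the implication $D^2 \mid X^2 \Rightarrow D \mid X$ in surjectivity and the use of cube-freeness to pin down $v_\ell(D) = 2$, which is precisely what converts the exponent-$3$ condition at level $D$ into the exponent-$1$ condition at level $1$.
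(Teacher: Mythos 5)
Your proof is correct and takes essentially the same route as the paper: verify the defining equation and the gcd condition forward, and obtain the inverse by dividing all three coordinates by $D$. You merely make explicit two points the paper's proof leaves implicit --- that $D^2 \mid X^2$ forces $D \mid X$ (via prime valuations), and the translation $\ell^3 \nmid Dy \iff \ell \nmid y$ when $v_\ell(D)=2$, which uses cube-freeness of the admissible $D$ --- and both of these details are handled correctly.
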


\begin{proof}
Indeed, note that if $(x,y,z) \in X^1_D(\Z)$, then
$(Dx)^2+27(Dy)^2=D^2(x^2+27y^2)=D^2(4Dz^3)=4(Dz)^3$,
hence $(Dx,Dy,Dz) \in X_1(\Z)$. Also since $(y,z)=1$, this implies $(Dy,Dz)=D$, hence $(Dx,Dy,Dz) \in X^D_1(\Z)$. Also note that, if $D$ is an admissible integer, then $(x,y,z) \in X^1_D(\Z)^*$ iff $(Dx,Dy,Dz) \in X_1^D(\Z)^*$.\\
Conversely, suppose $(x',y',z') \in X_1^D(\Z)$. Then $(y',z')=D$ and
hence $D$ divides $x'$, $y'$ and $z'$. Define the map
$$\theta_D^{-1}: X_1^D(\Z) \to X_D^1(\Z); (x',y',z') \mapsto
(\frac{x'}{D}, \frac{y'}{D}, \frac{z'}{D}).$$ This is evidently the
map $\theta_D^{-1}$ and maps $X_1^D(\Z)^*$ to $X_D^1(\Z)^*$.\\
We conclude that $\theta_D$ defines a bijection from $X_D^1(\Z)$ to
$X_1^D(\Z)$ and restricts to a bijection from $X_D^1(\Z)^*$ onto
$X_1^D(\Z)^*$.
\end{proof}
\vskip 3mm

\subsection{A Related Curve}

\noindent For an admissible integer $D$ such that $3 \nmid D$, we
consider the level curve $Y_D: X^2+3Y^2 = 4DZ^3$. As before, we
define
$$Y_D^1(\Z) = \{(x,y,z) \in \Z^3: xyz \neq 0,
x^2+3y^2=4Dz^3, (y,z)=1 \};$$
$$Y_D^1(\Z)^* = \{(x,y,z) \in Y_D^1(\Z): \ell^2|D \Rightarrow \ell \not| y~~\forall~~ {\rm prime}~~\ell  \}.$$

\begin{lemma}
Let $D$ be an admissible integer with $3 \nmid D$. Then the map
$$\delta_D: Y^1_D(\Z) \to X^1_{9D}(\Z) \text{ given by } \delta(x,y,z)= (3x,y,z)$$
defines a bijection. We remark that the assumption $3 \nmid D$ ensures that $9D$ is admissible.
\end{lemma}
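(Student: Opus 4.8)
The plan is to write down an explicit two-sided inverse for $\delta_D$, so that the only genuine content is checking that $\delta_D$ and its candidate inverse land in the prescribed sets. First I would verify well-definedness, i.e.\ that $\delta_D$ really maps $Y^1_D(\Z)$ into $X^1_{9D}(\Z)$. Given $(x,y,z) \in Y^1_D(\Z)$, substituting $(3x,y,z)$ into the defining equation of $X_{9D}$ and using $x^2+3y^2 = 4Dz^3$ gives $(3x)^2 + 27y^2 = 9(x^2+3y^2) = 9 \cdot 4Dz^3 = 4(9D)z^3$. Since the second and third coordinates are untouched, the coprimality $(y,z)=1$ persists, and $(3x)yz = 3xyz \neq 0$ because $xyz \neq 0$; hence $(3x,y,z) \in X^1_{9D}(\Z)$.

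Injectivity is immediate, as $\delta_D$ fixes the last two coordinates and multiplies the first by the nonzero constant $3$. For surjectivity I would produce the inverse directly: take $(X,Y,Z) \in X^1_{9D}(\Z)$, so that $X^2 + 27Y^2 = 4(9D)Z^3 = 36DZ^3$. The crucial point is that this forces $3 \mid X$: indeed $X^2 = 36DZ^3 - 27Y^2 = 9(4DZ^3 - 3Y^2)$, so $9 \mid X^2$ and therefore $3 \mid X$. Writing $X = 3x$ and dividing through by $9$ yields $x^2 + 3Y^2 = 4DZ^3$. Since $(Y,Z)=1$ is inherited and $xYZ \neq 0$ (note $x \neq 0$ as $X \neq 0$), we obtain $(x,Y,Z) = (X/3, Y, Z) \in Y^1_D(\Z)$ with $\delta_D(x,Y,Z) = (X,Y,Z)$. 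Thus $(X,Y,Z) \mapsto (X/3, Y, Z)$ is a genuine two-sided inverse.

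The main, and essentially only, obstacle is the divisibility $3 \mid X$ in the inverse direction; once that is secured the remainder is purely formal. I do not anticipate any difficulty with the coprimality or nonvanishing conditions, since the transformation merely rescales the first coordinate and leaves the pair $(Y,Z)$ unchanged.
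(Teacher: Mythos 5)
Your proof is correct and follows essentially the same route as the paper: verify $(3x)^2+27y^2 = 4(9D)z^3$ for well-definedness, and invert by observing that $a^2+27b^2=36Dc^3$ forces $3 \mid a$, then dividing out. You merely make explicit the details ($9 \mid X^2$ hence $3 \mid X$, preservation of $(Y,Z)=1$ and nonvanishing) that the paper dismisses as ``an easy calculation.''
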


\begin{proof}
A simple calculation shows that $(x,y,z) \in Y^1_D(\Z)$ implies $(3x,y,z) \in X^1_{9D}(\Z)$.\\
Conversely, suppose that $(x,y,z) \in X^1_{9D}(\Z)$. Then $x^2+
27y^2 = 36Dz^3$, hence $3 \mid x$. Again an easy calculation shows
$(\frac{x}{3}, y, z) \in Y^1_D(\Z)$.
\end{proof}

\begin{corollary}\label{9D}
Let $D$  be an admissible integer with $3 \nmid D$. Then the map
$\delta_D$ above defines a bijection of
$$S^\prime_D=\{ (x,y,z) \in
Y_D^1(\Z)^* \mid 3 \nmid y \}$$ with $X^1_{9D}(\Z)^*$.
\end{corollary}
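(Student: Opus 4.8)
The plan is to build on the previously established bijection $\delta_D : Y^1_D(\Z) \to X^1_{9D}(\Z)$, $(x,y,z) \mapsto (3x,y,z)$, and simply verify that it carries the distinguished subset $S'_D$ onto $X^1_{9D}(\Z)^*$. Since $\delta_D$ is already known to be a bijection, the only task is to check that it restricts correctly, i.e. that a point $(x,y,z) \in Y^1_D(\Z)$ lies in $S'_D$ if and only if its image $(3x,y,z)$ lies in $X^1_{9D}(\Z)^*$. This reduces the whole statement to comparing two defining conditions under the map, so the argument is essentially a bookkeeping verification rather than anything that requires new input.

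First I would recall the defining conditions explicitly. A point $(3x,y,z) \in X^1_{9D}(\Z)$ lies in $X^1_{9D}(\Z)^*$ exactly when, for every prime $\ell$ with $\ell^2 \mid 9D$, we have $\ell \nmid y$. A point $(x,y,z) \in Y^1_D(\Z)^*$ satisfies the analogous condition with $D$ in place of $9D$: for every prime $\ell$ with $\ell^2 \mid D$, we have $\ell \nmid y$. The membership in $S'_D$ adds the single extra requirement $3 \nmid y$. So the crux is to understand how the starred condition changes when the level coefficient is multiplied by $9$, and that is governed entirely by the prime $3$, since $3 \nmid D$ by admissibility.

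Next I would separate the primes into the prime $3$ and the primes $\ell \neq 3$. For a prime $\ell \neq 3$, the condition $\ell^2 \mid 9D$ is equivalent to $\ell^2 \mid D$ (because $3 \nmid D$ forces $\ell$ to divide $D$ to the same power as it divides $9D$), so the requirement $\ell \nmid y$ is identical in both sets. Hence for all primes other than $3$, the starred condition for $X^1_{9D}(\Z)^*$ and that for $Y^1_D(\Z)^*$ coincide on $(x,y,z)$. The only genuine difference arises at $\ell = 3$: we have $3^2 \mid 9D$ always (since $9 \mid 9D$), so the starred condition for the $9D$-curve always imposes $3 \nmid y$, whereas the $D$-curve imposes no condition at $3$ because $3 \nmid D$ means $3^2 \nmid D$. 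This extra requirement $3 \nmid y$ is precisely the condition cutting out $S'_D$ from $Y^1_D(\Z)^*$.

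Assembling these observations, $(3x,y,z) \in X^1_{9D}(\Z)^*$ holds if and only if $\ell \nmid y$ for every $\ell \neq 3$ with $\ell^2 \mid D$ and in addition $3 \nmid y$; the first clause says $(x,y,z) \in Y^1_D(\Z)^*$ and the second says $3 \nmid y$, which together are exactly the definition of $S'_D$. Combined with the fact that $\delta_D$ is already a bijection from $Y^1_D(\Z)$ to $X^1_{9D}(\Z)$, this shows $\delta_D$ restricts to a bijection $S'_D \to X^1_{9D}(\Z)^*$. I expect no serious obstacle here; the one point requiring slight care is the handling of $\ell = 3$, namely remembering that admissibility guarantees $3 \nmid D$ so that the level-raising by $9$ introduces the factor $3^2$ cleanly and the comparison of the two starred conditions isolates exactly the constraint $3 \nmid y$.
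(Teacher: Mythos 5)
Your proposal is correct and follows essentially the same route as the paper: both arguments rest on the already-established bijection $\delta_D$ and then verify the restriction by observing that for primes $\ell \neq 3$ (since $3 \nmid D$) the starred conditions for $Y^1_D(\Z)^*$ and $X^1_{9D}(\Z)^*$ coincide, while the prime $3$ contributes exactly the extra constraint $3 \nmid y$ cutting out $S'_D$. Your prime-by-prime bookkeeping, including the careful treatment of $\ell = 3$ via $3^2 \mid 9D$, matches the paper's verification in both directions.
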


\begin{proof}
If $(x,y,z) \in S^\prime_D$, then $\delta_D(x,y,z)=(3x,y,z) \in
 X^1_{9D}(\Z)$. It is obvious that for a prime $\ell \equiv 1 \pmod{3}$, $\ell^2
\mid D \iff \ell^2 \mid 9D$ and in this situation $\ell \nmid y$ as
$(x,y,z) \in Y_D^1(\Z)^*$.
Moreover, $3 \nmid y$ as $(x,y,z) \in S^\prime_D$. Hence $\delta_D(x,y,z) \in X^1_{9D}(\Z)^*$.\\
Conversely, if $(a,b,c) \in X^1_{9D}(\Z)^* \subset X^1_{9D}(\Z)$,
then for a prime $\ell \equiv 1 \pmod{3}$, $\ell^2 \mid D \iff
\ell^2 \mid 9D$ and in this situation $\ell \nmid b$ as $(a,b,c) \in
X_{9D}^1(\Z)^*$. Moreover, we have $3 \nmid b$. We conclude that
$(\frac{a}{3},b,c) \in S^\prime_D$.
\end{proof}

\noindent We recall the easy parametrization of the set $Y^1_1(\Z)$
as mentioned in the introduction.

\begin{theorem}\cite{Cohen2}[Proposition 14.2.1(2)]\label{primitivesol}
The equation $X^2+3Y^2=4Z^3$ in non-zero integers $x,y$ and $z$ with
$x$ and $y$ coprime has two disjoint parametrization
$$(x,y,z)= ((s+t)(2s-t)(s-2t), 3st(s-t), s^2-st+t^2)$$
$$(x,y,z)=(\pm((s+t)^3-9st^2), s^3-3s^2t+t^3, s^2-st+t^2)$$
where in both cases $s$ and $t$ are co-prime integers with $3 \nmid
(s+t)$. The first parametrization corresponds to the case $6 \mid y$
and the second where $6$ is coprime to $y$.
\end{theorem}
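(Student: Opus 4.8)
The plan is to reduce the equation to a statement about cubes in the Eisenstein ring $\Z[\omega]$, whose arithmetic is especially convenient: it is Euclidean (hence a UFD), its unit group $\{\pm 1, \pm\omega, \pm\omega^2\}$ is cyclic of order $6$, and the rational prime $3$ ramifies as $3 = -\omega^2\lambda^2$ with $\lambda = 1-\omega$ of norm $3$. First I would record the parity constraint: reducing $x^2+3y^2=4z^3$ modulo $4$ and using $\gcd(x,y)=1$ forces both $x$ and $y$ to be odd. Consequently, writing $\sqrt{-3}=1+2\omega$, the element $\pi := \frac{x+y\sqrt{-3}}{2}=\frac{x+y}{2}+y\omega$ lies in $\Z[\omega]$, and it satisfies $\pi\bar\pi = \frac{x^2+3y^2}{4}=z^3$, where $\bar\pi$ is the complex conjugate. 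Thus the problem becomes the description of all factorizations of $z^3$ as $\pi\bar\pi$; this is precisely the arithmetic underlying the reduction to $x^2-3xy+3y^2=z^3$ mentioned in the introduction, since $N(\pi)$ is a form of that shape.

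The next step is a coprimality and descent argument. A short congruence shows that a primitive solution has $3\nmid z$: if $3\mid z$ then $3\mid x$, and chasing powers of $3$ through $3y^2 = 4z^3-x^2$ forces $3\mid y$, contradicting $\gcd(x,y)=1$. Since $3\nmid z$, the prime $\lambda$ divides neither $z$ nor $\pi$. Any common divisor $\delta$ of $\pi$ and $\bar\pi$ divides both $\pi+\bar\pi=x$ and $\pi-\bar\pi=y\sqrt{-3}$; combined with $\gcd(x,y)=1$ and $\lambda\nmid\pi$, this forces $\delta$ to be a unit, so $\pi$ and $\bar\pi$ are coprime. As $\Z[\omega]$ is a UFD and $\pi\bar\pi=z^3$ has coprime factors, $\pi$ is a unit times a cube. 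The cubes among the units are exactly $\{\pm 1\}$, so after absorbing the sign into the cube I can write $\pi=\omega^k\gamma^3$ with $k\in\{0,1,2\}$ and $\gamma=s+t\omega\in\Z[\omega]$; moreover $\gcd(x,y)=1$ translates into $\gcd(s,t)=1$, and $3\nmid z=N(\gamma)=s^2-st+t^2\equiv (s+t)^2 \pmod 3$ is exactly the stated condition $3\nmid(s+t)$.

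It then remains to expand and match coefficients. Computing $\gamma^3=(s^3-3st^2+t^3)+3st(s-t)\,\omega$ and comparing the coefficients of $1$ and $\omega$ in $\pi=\frac{x+y}{2}+y\omega=\omega^k\gamma^3$ gives explicit polynomial formulas for $x$ and $y$, with $z=s^2-st+t^2$ in every case. The case $k=0$ yields $y=3st(s-t)$ and $x=(s+t)(2s-t)(s-2t)$, the first parametrization; here $y$ is automatically divisible by $6$ because one of $s,t,s-t$ is even. The case $k=1$ yields $y=s^3-3s^2t+t^3$ and $x=-((s+t)^3-9st^2)$, the second parametrization, where $y\equiv s+t\not\equiv 0\pmod 3$ and $y$ is odd, so $\gcd(y,6)=1$; the case $k=2$ reproduces this same family after the relabeling $(s,t)\mapsto(t,s)$ together with an overall sign, which is absorbed by the $\pm$. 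The two families are disjoint precisely because they are separated by $6\mid y$ versus $\gcd(y,6)=1$. Finally, the converse direction, that each listed triple solves the equation, is the direct computation $N(\omega^k\gamma^3)=N(\gamma)^3=z^3$.

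I expect the main obstacle to be the bookkeeping around the ramified prime $\lambda$: one must carefully establish $3\nmid z$ so that $\pi$ and $\bar\pi$ are genuinely coprime and no stray factor of $\lambda$ spoils the unique factorization into a cube, and then verify that the three unit-twists $k=0,1,2$ collapse to exactly the two advertised families rather than three, with the sign ambiguity on $x$ accounting for the folding of $k=2$ into $k=1$. The coefficient matching itself is routine algebra once this framework is in place.
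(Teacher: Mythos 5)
Your proposal follows exactly the route the paper itself points to: the paper does not prove this theorem but cites Cohen (Proposition 14.2.1(2)), whose argument is the reduction to the norm form of $\Z[\omega]$ that you sketch --- form $\pi=\frac{x+y\sqrt{-3}}{2}\in\Z[\omega]$, show $\pi\bar\pi=z^3$ with $\pi,\bar\pi$ coprime, invoke the UFD property and the fact that the units modulo cubes are $\{1,\omega,\omega^2\}$, expand $\gamma^3=(s^3-3st^2+t^3)+3st(s-t)\omega$, and fold $k=2$ into $k=1$ by a relabeling of parameters. All of that bookkeeping is right (the precise relabeling is $(s,t)\mapsto(-t,-s)$, which fixes $z$ and realizes the $+$ sign on $x$; your ``$(t,s)$ plus an overall sign'' is the same thing).

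However, there is a genuine internal contradiction in your writeup. Your first step uses $\gcd(x,y)=1$ to conclude from the equation mod $4$ that $x$ and $y$ are \emph{both odd}; but your $k=0$ case then produces $y=3st(s-t)$, which is always even (since $s,t$ coprime forces one of $s,t,s-t$ to be even), and indeed $x=(s+t)(2s-t)(s-2t)$ is always even as well. Taken literally, your parity step proves the first parametrization cannot occur, yet you retain it and even remark that $6\mid y$ there --- these two assertions cannot both stand. The source of the trouble is that the hypothesis ``$x$ and $y$ coprime'' in the statement is not the condition the parametrization actually satisfies: the set being parametrized is $Y_1^1(\Z)$, i.e.\ solutions with $\gcd(y,z)=1$, and the first family genuinely contains points with $\gcd(x,y)=2$, e.g.\ $(20,18,7)$ (from $s=3$, $t=1$), which the paper itself uses as an example. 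Under the correct hypothesis your argument needs two repairs: the mod-$4$ analysis yields only $x\equiv y\pmod 2$ (both odd \emph{or} both even, so $\pi\in\Z[\omega]$ still makes sense, and the parity of $y$ is exactly what separates $k=0$ from $k=1,2$); and the coprimality of $\pi$ and $\bar\pi$ can no longer lean on $\gcd(x,y)=1$ --- instead note that a common divisor $\delta$ satisfies $N(\delta)\mid z^3$ and, since $\delta\mid\pi-\bar\pi=y\sqrt{-3}$, also $N(\delta)\mid 3y^2$, and $\gcd(z^3,3y^2)=1$ because $3\nmid z$ and $(y,z)=1$. (The descent showing $3\nmid z$ likewise goes through with $(y,z)=1$ in place of $\gcd(x,y)=1$: $3\mid z$ forces $3\mid x$ and then $3\mid y$.) With those two repairs your argument becomes exactly the cited proof.
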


\noindent Using this we obtain the following description of $X^9_1(\Z)^*$ as follows.

\begin{theorem}\label{X91}
$$X_1^9(\Z)^* = \{ \pm 27 ((s+t)^3-9st^2), 9(s^3-3s^2t+t^3), 9( s^2-st+t^2) \}$$
where $s$ and $t$ are co-prime integers with $3 \nmid (s+t)$.
\end{theorem}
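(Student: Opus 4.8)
The plan is to obtain $X_1^9(\Z)^*$ by composing the two bijections already established, namely $\delta_1$ from Corollary \ref{9D} and $\theta_9$ from Lemma \ref{thetaD}, applied to the explicit parametrization of $Y_1^1(\Z)$ furnished by Theorem \ref{primitivesol}. Concretely, since $1$ is admissible with $3 \nmid 1$, Corollary \ref{9D} gives a bijection $\delta_1 \colon S'_1 \to X_9^1(\Z)^*$, $(x,y,z) \mapsto (3x,y,z)$, where $S'_1 = \{(x,y,z) \in Y_1^1(\Z)^* : 3 \nmid y\}$. As $D=1$ is square-free we have $Y_1^1(\Z)^* = Y_1^1(\Z)$, so $S'_1 = \{(x,y,z) \in Y_1^1(\Z): 3 \nmid y\}$. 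Then Lemma \ref{thetaD} with $D = 9$ gives a bijection $\theta_9 \colon X_9^1(\Z)^* \to X_1^9(\Z)^*$, $(x,y,z) \mapsto (9x,9y,9z)$. Thus $X_1^9(\Z)^*$ is the image of $S'_1$ under the composite $(x,y,z) \mapsto (27x, 9y, 9z)$.

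The only real content is to describe $S'_1$ via Theorem \ref{primitivesol}, i.e. to decide which of the two disjoint parametrizations of $Y_1^1(\Z)$ satisfies $3 \nmid y$. In the first parametrization $y = 3st(s-t)$ is visibly divisible by $3$, so no point of it lies in $S'_1$. In the second, $y = s^3 - 3s^2t + t^3$; reducing modulo $3$ and using $a^3 \equiv a \pmod 3$ gives $y \equiv s + t \pmod 3$, so the constraint $3 \nmid (s+t)$ forces $3 \nmid y$. This is consistent with the fact, recorded in Theorem \ref{primitivesol}, that the second parametrization is exactly the case $\gcd(6,y)=1$. Hence $S'_1$ is precisely the second family $\{(\pm((s+t)^3 - 9st^2),\, s^3 - 3s^2t + t^3,\, s^2 - st + t^2)\}$ with $s,t$ coprime and $3 \nmid (s+t)$.

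Finally I would apply the composite $(x,y,z) \mapsto (27x, 9y, 9z)$ to this family. With $x = \pm((s+t)^3 - 9st^2)$, $y = s^3 - 3s^2t + t^3$ and $z = s^2 - st + t^2$ this yields the triples $(\pm 27((s+t)^3 - 9st^2),\, 9(s^3 - 3s^2t + t^3),\, 9(s^2 - st + t^2))$, which is exactly the claimed description. Since $S'_1$ maps bijectively onto $X_9^1(\Z)^*$ under $\delta_1$, which in turn maps bijectively onto $X_1^9(\Z)^*$ under $\theta_9$, no point is lost or added, and the equality is exact.

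There is no genuinely hard step here: the statement is a bookkeeping composition of bijections proved earlier, and the sole point requiring care is the congruence $s^3 - 3s^2t + t^3 \equiv s + t \pmod 3$, which cleanly separates the two parametrizations according to whether $3 \mid y$. The main thing to guard against is an off-by-a-power slip in chaining the two scalings $(3x,y,z)$ and $(9x,9y,9z)$, which is why I would track the composite map $(x,y,z)\mapsto(27x,9y,9z)$ explicitly rather than apply $\delta_1$ and $\theta_9$ separately.
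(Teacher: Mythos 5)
Your proposal is correct and matches the paper's proof essentially verbatim: the paper likewise composes $\delta_1$ (which it calls $\delta_9$, a slip of notation) with $\theta_9$ to get the map $(x,y,z)\mapsto(27x,9y,9z)$ on $S'_1$, and identifies $S'_1$ with the second parametrization of Theorem \ref{primitivesol}. Your explicit check that $s^3-3s^2t+t^3 \equiv s+t \pmod 3$ merely spells out what the paper takes directly from the $\gcd(6,y)=1$ clause of that theorem.
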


\begin{proof}
Composition of the maps $\delta_9$ and $\theta_9$ gives us a bijection
$$\theta_9 \circ \delta_9: S^\prime_1 \to X^1_9(\Z)^* \to X^9_1(\Z)^* \text{ given by } (x,y,z) \mapsto (27x,9y,9z).$$
Since $S^\prime_1 = \{ (\pm((s+t)^3-9st^2), s^3-3s^2t+t^3, s^2-st+t^2) \mid (s,t)=1, 3 \nmid s+t \}$ the result follows.
\end{proof}

\noindent We are interested in determining the set $X_D^1(\Z)^* =
\{(x,y,z) \in \Z^3: xyz \neq 0, x^2+27y^2=4Dz^3, (y,z)=1, \ell^2|D
\Rightarrow \ell \not| y~~\forall~~ {\rm prime}~~\ell  \}$ in terms
of the set $Y_D^1(\Z)^* = \{(x,y,z) \in \Z^3): xyz \neq 0, x^2+3y^2
= 4Dz^3, (y,z)=1, \ell^2|D \Rightarrow \ell \not| y~~\forall~~ {\rm
prime}~~\ell  \}$. \vskip 2mm

\begin{lemma}\label{D}
Let $D$ be an admissible integer with $3 \nmid D$. Then
$$X^1_D(\Z)^* =  \{ (9y, x, 3z) \mid (x,y,z) \in Y^1_D(\Z)^* \} \cup \{ (x, \frac{y}{3} , z) \mid (x,y, z) \in Y^1_D(\Z)^*, 3 \mid y \}.$$
\end{lemma}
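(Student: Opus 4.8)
The plan is to prove the set equality by establishing both inclusions through the two explicit maps already foreshadowed in the introduction: $\varphi_1(x,y,z)=(9y,x,3z)$, defined on all of $Y^1_D(\Z)^*$, and $\varphi_2(x,y,z)=(x,\frac{y}{3},z)$, defined on the subset where $3\mid y$. That these respect the two equations is a one-line algebraic identity, $(9y)^2+27x^2=27(x^2+3y^2)=4D(3z)^3$ and $x^2+27(\frac{y}{3})^2=x^2+3y^2=4Dz^3$, so the real content of the lemma is that the four defining conditions (the equation, non-vanishing of the coordinate product, the coprimality $(Y,Z)=1$, and the star condition $\ell^2\mid D\Rightarrow\ell\nmid Y$) are preserved in both directions.

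The key preliminary observation, on which everything hinges, is a $3$-adic one. First I would show that for every $(x,y,z)\in Y^1_D(\Z)$ with $3\nmid D$ one has $3\nmid z$, and consequently $3\nmid x$. Indeed, if $3\mid z$ then $(y,z)=1$ forces $3\nmid y$, and reducing $x^2+3y^2=4Dz^3$ modulo $9$ gives $x^2\equiv 0$ but $x^2+3y^2\equiv 3\pmod 9$ while $4Dz^3\equiv 0\pmod 9$, a contradiction; then $x^2\equiv Dz^3\pmod 3$ with $3\nmid Dz$ gives $3\nmid x$. Dually, for $(a,b,c)\in X^1_D(\Z)^*$ I would record that $3\mid c$ forces $27\mid a^2$, hence $9\mid a$. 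These two facts are exactly what make $\varphi_1,\varphi_2$ land where claimed and, more importantly, let the single test ``$3\mid c$?'' cleanly partition $X^1_D(\Z)^*$ into the two images, since the image of $\varphi_1$ always has $3\mid Z$ and that of $\varphi_2$ always has $3\nmid Z$.

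For the inclusion $\supseteq$ I would verify the two remaining conditions for each map. Coprimality and the star condition both reduce to the same elementary divisibility step: if a prime $\ell$ (necessarily $\ell\neq 3$ in the relevant cases) divided both new coordinates $Y,Z$, or divided the new $Y$-coordinate while $\ell^2\mid D$, then from the defining equation $\ell$ would divide $y$, contradicting either $(y,z)=1$ or the star condition on the source point. Concretely, for $\varphi_1$ one needs $(x,3z)=1$, which follows from $(x,z)=1$ together with $3\nmid x$ above, and one needs $\ell\nmid x$ whenever $\ell^2\mid D$, which follows since $\ell\mid x^2+3y^2$ forces $\ell\mid y$ once $\ell\mid x$; for $\varphi_2$ the conditions $(\frac{y}{3},z)=1$ and $\ell\nmid\frac{y}{3}$ are immediate from $(y,z)=1$ and $\ell\nmid y$.

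For the reverse inclusion $\subseteq$, given $(a,b,c)\in X^1_D(\Z)^*$ I would split on whether $3\mid c$. If $3\mid c$, then $9\mid a$ by the observation above, and $(a,b,c)=\varphi_1(b,\frac{a}{9},\frac{c}{3})$ with $(b,\frac{a}{9},\frac{c}{3})\in Y^1_D(\Z)^*$; if $3\nmid c$, then $(a,b,c)=\varphi_2(a,3b,c)$ with $(a,3b,c)\in Y^1_D(\Z)^*$ and $3\mid 3b$. In each case the equation and non-vanishing are clear, and the coprimality and star conditions of the preimage follow from those of $(a,b,c)$ by the same divisibility arguments; the only delicate point is to rule out $3$ as a common factor, for instance $(\frac{a}{9},\frac{c}{3})=1$ needs that $3\mid\frac{c}{3}$ would force $3\mid b$, contradicting $(b,c)=1$. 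I expect this $3$-adic bookkeeping --- ensuring that dividing and multiplying coordinates by powers of $3$ neither destroys the coprimality $(Y,Z)=1$ nor manufactures a spurious common factor of $3$ --- to be the only genuine obstacle; everything else is routine.
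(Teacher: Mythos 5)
Your proof is correct and takes essentially the same route as the paper: your $\varphi_1$ and $\varphi_2$ are exactly the paper's maps $\alpha_D\colon (x,y,z)\mapsto(9y,x,3z)$ and $\alpha'_D\colon(x,y,z)\mapsto(x,\frac{y}{3},z)$, and your case split on $3\mid c$ reproduces the paper's map $\beta_D$ (which sends $(x,y,z)$ to $(x,3y,z)$ if $3\nmid z$ and to $(y,\frac{x}{9},\frac{z}{3})$ if $3\mid z$) together with the identities $\alpha'_D\circ\beta_D=\mathrm{id}$ and $\alpha_D\circ\beta_D=\mathrm{id}$. The $3$-adic facts you isolate --- $3\nmid z$ and hence $3\nmid x$ for points of $Y^1_D(\Z)$ when $3\nmid D$, and $3\mid z\Rightarrow 9\mid x$ for points of $X^1_D(\Z)$ --- are precisely the observations the paper uses to justify these maps.
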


\begin{proof} Let us write $X^1_D(\Z)^*$ as a disjoint union of the sets $T_D$ and $T_D'$, where
$$T_D = \{ (x,y,z) \in X^1_D(\Z)^* \mid 3 \nmid z\}~,~T_D' =  \{ (x,y,z) \in X^1_D(\Z)^* \mid 3 \mid z\}.$$
First, we show there is a bijection between $T_D$ and $S_D=  \{ (x,y,z) \in Y^1_D(\Z)^* \mid 3 \mid y\}$.
We have a natural map $\beta_D : T_D \to S_D$ given by $\beta_D(x,y,z) = (x, 3y, z)$.
Note that it is essential that $3 \nmid D$ for the map $\beta_D$ to make sense
(if $ 3 \mid D$, then $9 \mid D$, which will imply that the set $S_D$ is empty).
We also have a natural map $\alpha_D : S_D \to T_D$ given by $\alpha_D(x,y,z)= (x, \frac{y}{3}, z)$.
Observe that $\alpha_D \circ \beta_D = id_{T_D}$ and $\beta_D \circ \alpha_D = id_{S_D}$.
As a consequence, we can identify $T_D$ with $\alpha_D(S_D) = \{ (x, \frac{y}{3} , z) \mid (x,y, z) \in Y^1_D(\Z)^*, 3 \mid y \}$.\\
Now we will show that there is a bijection between $T'_D$ and $Y^1_D(\Z)^*$ as follows.\\
We define a map $\beta'_D: T'_D \to Y^1_D(\Z)^*$ given by
$\beta'_D(x,y,z) = (y, \frac{x}{9}, \frac{z}{3})$. To ensure that
the map is well-defined, we need to check:\\ $(i)~ 9 \mid x, (ii)~
GCD (\frac{x}{9}, \frac{z}{3}) =1$ and $(iii)$ if $\ell^2 \mid D$,
then $\ell \nmid \frac{x}{9}$. \\Now if $(x,y,z) \in X^1_D(\Z)$ and
$3 \mid z$, then it follows that $9 \mid x$. Moreover in this
situation, it follows that if  $GCD (\frac{x}{9} ,
\frac{z}{3})=d>1$, then $d \mid y$, which contradicts $GCD (y,z)=1$.
Further, as $(x,y,z) \in X^1_D(\Z)^*$, it follows that if $\ell^2
\mid D$, then $\ell \nmid y$. Hence $\ell \nmid y^2 =
(4D(\frac{z}{3})^3 - 3(\frac{x}{9})^2)$, which implies $\ell \nmid
\frac{x}{9}$ (as $\ell \neq 3$).
This shows that the map $\beta'_D$ is well defined.\\
We also have a map $\alpha'_D: Y^1_D(\Z)^* \to T'_D$ given by
$\alpha'_D(x,y,z) = (9y, x, 3z)$. To ensure that the map is well
defined, we need to check:\\ $(i)~  GCD (x,3z) =1$ and $(ii)$ if
$\ell^2 \mid D$, then $\ell \nmid x$. \\Note that if $(x,y,z) \in
Y^1_D(\Z)^*$ with $3 \mid x$, then $3 \mid z$ (as $3 \nmid D$),
which in turn shows that $3 \mid y$, which contradicts $GCD
(y,z)=1$. This shows that $GCD (x,3z) = GCD (x,z)= d$ with $ 3\nmid
d$. Now if $GCD (x,z) = d >1$, then $d^2 \mid 3y^2 = 4Dz^3- x^2$ and
hence $d \mid y$, which contradicts $GCD (y,z)=1$. Thus $GCD (x,3z)
= 1$. Moreover, as $(x,y,z) \in Y^1_D(\Z)^*$, it follows that if
$\ell^2 \mid D$, then $\ell \nmid y$. Hence $\ell$ does not divide
$x^2 = (4Dz^3-27y^2)$ as $\ell^2$ divides $D$ and $\ell \neq 3$.
This shows that the map $\alpha'_D$ is well defined.\\
Now observe that $\alpha'_D \circ \beta'_D = id_{T'_D}$ and $\beta'_D \circ \alpha'_D = id_{Y^1_D(\Z)^*}$. \\
As a consequence, we can identify $T'_D$ with
$\alpha'_D(Y^1_D(\Z)^*) =  \{ (9y, x, 3z) \mid (x,y,z) \in
Y^1_D(\Z)^* \}$. This completes the proof of the lemma.
\end{proof}

\noindent Using the parametrization of $Y^1_1(\Z)$ and Lemma \ref{D}
above, we obtain immediately:

\begin{theorem}\label{X11}
The set $X_1^1(\Z) = \{(x,y,z) \in \Z^3: xyz \neq 0, x^2+27y^2=4z^3,
(y,z)=1\}$
 is given by the parametrizations
\begin{multline*}
 \{ (9(s^3-3s^2t+t^3), \pm((s+t)^3-9st^2), 3(s^2-st+t^2) ) \} \cup  \\
 \{ (27st(s-t), (s+t)(2s-t)(s-2t), 3(s^2-st+t^2) ) \} \cup \{ ((s+t)(2s-t)(s-2t), st(s-t), s^2-st+t^2)    \},
 \end{multline*}
 where $s$ and $t$ are coprime integers with $3 \nmid (s+t)$.
\end{theorem}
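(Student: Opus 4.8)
The plan is to obtain Theorem~\ref{X11} as an immediate corollary of Lemma~\ref{D}, fed by the explicit parametrization of $Y^1_1(\Z)$ recorded in Theorem~\ref{primitivesol}. First I would observe that $D=1$ is admissible with $3 \nmid 1$, so Lemma~\ref{D} applies; moreover, since $1$ is square-free the defining condition ``$\ell^2 \mid D \Rightarrow \ell \nmid y$'' is vacuous, whence $X^1_1(\Z)=X^1_1(\Z)^*$ and $Y^1_1(\Z)=Y^1_1(\Z)^*$. Specializing Lemma~\ref{D} to $D=1$ then gives
$$X^1_1(\Z) = \{ (9y, x, 3z) \mid (x,y,z) \in Y^1_1(\Z) \} \cup \{ (x, \frac{y}{3}, z) \mid (x,y,z) \in Y^1_1(\Z),\ 3 \mid y \},$$
so it remains only to push the two parametrizations of Theorem~\ref{primitivesol} through the maps $\alpha_1(x,y,z)=(9y,x,3z)$ and $\alpha'_1(x,y,z)=(x,\frac{y}{3},z)$.

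Next I would carry out the substitutions for the first piece, the image of $\alpha_1$. Inserting the second Cohen parametrization $(x,y,z)=(\pm((s+t)^3-9st^2),\, s^3-3s^2t+t^3,\, s^2-st+t^2)$ into $(9y,x,3z)$ produces exactly the first family $(9(s^3-3s^2t+t^3),\, \pm((s+t)^3-9st^2),\, 3(s^2-st+t^2))$ in the statement. Inserting the first Cohen parametrization $(x,y,z)=((s+t)(2s-t)(s-2t),\, 3st(s-t),\, s^2-st+t^2)$ into $(9y,x,3z)$ yields $(27st(s-t),\,(s+t)(2s-t)(s-2t),\,3(s^2-st+t^2))$, the second family. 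Both are purely mechanical algebraic substitutions.

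Finally I would treat the second piece, the image of $\alpha'_1$, which ranges only over those $(x,y,z)\in Y^1_1(\Z)$ with $3\mid y$. The one point needing care is the case analysis deciding which parametrization satisfies $3\mid y$. For the first parametrization one has $y=3st(s-t)$, always divisible by $3$, and $\alpha'_1$ sends it to $((s+t)(2s-t)(s-2t),\, st(s-t),\, s^2-st+t^2)$, the third family. For the second parametrization, reducing modulo $3$ gives $y=s^3-3s^2t+t^3\equiv s+t \pmod 3$, which is coprime to $3$ by the standing hypothesis $3\nmid(s+t)$; hence this parametrization contributes nothing to the $\alpha'_1$-part. (This is consistent with Theorem~\ref{primitivesol}, where the second parametrization is exactly the case in which $y$ is coprime to $6$.) Collecting the three resulting families gives precisely the stated union.

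I expect the only step requiring any genuine attention to be this divisibility-by-$3$ dichotomy --- confirming that $3\mid y$ holds identically in the first parametrization and fails identically in the second --- since everything else reduces to substituting the Cohen formulas into the two coordinate maps. There is no real obstacle here: the theorem is essentially an unwinding of Lemma~\ref{D}, and the main value of the argument is organizational bookkeeping rather than any new difficulty.
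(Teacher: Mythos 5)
Your proposal is correct and follows exactly the paper's route: the paper derives Theorem~\ref{X11} ``immediately'' from Lemma~\ref{D} specialized to $D=1$ together with the parametrization of $Y^1_1(\Z)$ in Theorem~\ref{primitivesol}, which is precisely your argument. Your explicit verification that $3\mid y$ holds identically in the first Cohen parametrization ($y=3st(s-t)$) and fails identically in the second ($y\equiv s+t\not\equiv 0\pmod 3$) just fills in the bookkeeping the paper leaves implicit.
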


 \noindent {\bf Remark.} We remark that the three sets that occur in the parametrization appearing in Theorem \ref{X11} are disjoint.
To see this, note that \\$(s+t)(2s-t)(s-2t) = (s+t)(3s -
(s+t))((s+t) -3t) \equiv -(s+t)^3 \pmod{3}$ and\\ $s^3-3s^2t+t^3 =
(s+t)^3 - 3(2s^2t+ st^2 \equiv (s+t)^3 \pmod{3}$.\\ Thus the
condition $3 \nmid (s+t)$ implies $9 \mid \mid 9(s^3-3s^2t+t^3)$ and
$3 \nmid (s+t)(2s-t)(s-2t)$. This is equivalent to asserting that
the first co-ordinate of an element is exactly divisible by $9$ in
the first parametric set, is divisible by $27$ in the second
parametric set, and is coprime to $3$ in the third parametric set.

\section{Integral points on the curve $X^2+3Y^2=4DZ^3$.}

\noindent Recall that $Y_D$ denotes the affine curve $X^2+3Y^2=4DZ^3$. By $Y_D(\Z)$ we denote the set of integral points on the affine curve $Y_D$, that is $Y_D(\Z) = \{ (a,b,c) \in \Z^3 \mid a^2 + 3b^2 = 4Dc^3 \}$. In this section, for a cube-free natural number $D$, we describe the set
$$Y^1_D(\Z) =  \{ (X,Y,Z) \in \Z^3 \mid X^2+3Y^2=4DZ^3 , XYZ \neq 0 , (Y,Z)=1 \}.$$
We observe that if for a prime $\ell$ , $\ell^2 \mid D$ and $(x,y,z)
\in Y^1_D(\Z)$ with $ \ell \mid y$,
then $\ell \mid x$ and we have $(\frac{x}{\ell}, \frac{y}{\ell}, z) \in Y^1_{\frac{D}{\ell^2}}(\Z)$. \\

\noindent Using this, we can inductively describe $Y^1_D(\Z)$ as follows:\\
Assume that $3 \nmid D$. Write $D=\ell_1^2  \cdots \ell_r^2 D_1$, where $D_1$ is square-free with $\ell_1\cdots \ell_r \nmid D_1$.\\
We have $Y^1_{D_1}(\Z) = Y^1_{D_1}(\Z)^*$.\\
Now
\begin{align*}
Y^1_{\ell_1^2D_1}(\Z) & = \{ (x,y,z) \in Y^1_{\ell_1^2D_1}(\Z) \mid \ell_1 \nmid y \} \sqcup \{ (x,y,z) \in Y^1_{\ell_1^2D_1}(\Z) \mid \ell_1 \mid y \} \\
& = Y^1_{\ell_1^2D_1}(\Z)^* \sqcup \{ (\ell_1 x_1, \ell_1 y_1, z) \mid (x_1, y_1,z) \in  Y^1_{D_1}(\Z)^*, \ell_1 \nmid z \}.
\end{align*}
For the last equality, we have used the fact that $\ell_1 \mid y$
implies $\ell_1 \mid x$ and $(\frac{x}{\ell_1}, \frac{y}{\ell_1}, z)
\in Y^1_{D_1}(\Z)= Y^1_{D_1}(\Z)^*$.

\noindent Proceeding in this manner, let $1 <B_1\neq B_2  \neq
\cdots \neq B_{2^r-1} \le \ell_1 \cdots \ell_r$ be the set of
divisors of $\ell_1\cdots \ell_r$. We have
$$Y^1_D(\Z) = Y^1_D(\Z)^* \sqcup_{i=1}^{2^r-1} \{ (B_i x, B_i y, z) \mid (x,y,z) \in Y^1_{\frac{D}{B_i^2}}(\Z)^*, B_i \nmid z \}.$$

\noindent Thus, for any cube-free integer $D$, with $3 \nmid D$, we can explicitly describe the set $Y^1_D(\Z)$ if we know the set $Y^1_D(\Z)^*$
for all cube-free integers $D$, with $3 \nmid D$.\\
We observe that for any cube-free integer $D$, with $3 \nmid D$,
$(a,b,c) \in Y^1_D(\Z)$ implies $3 \nmid c$. We can construct
$Y^1_{3D}(\Z)$ and $Y^1_{9D}(\Z)$ from $Y^1_D(\Z)$ as follows:
$$Y^1_{3D}(\Z) = \{ (3b, a, c) \mid (a,b,c) \in Y^1_D(\Z)\}  \text{ and }  Y^1_{9D}(\Z) = \{ (3a, 3b, c) \mid (a,b,c) \in Y^1_D(\Z)\}.$$
Thus, it suffices to understand the sets $Y^1_D(\Z)^*$ for all cube-free $D$, with $3 \nmid D$.\\

\begin{proposition}
Let $D$ be a cube-free integer, with $3 \nmid D$. If a prime $\ell \equiv 2 \pmod{3}$ divides $D$, then $Y^1_D(\Z)^*$ is empty.
\end{proposition}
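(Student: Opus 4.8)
The plan is to mirror the proof of Proposition \ref{improperD}, replacing the coefficient $27$ by $3$, and to split according to whether $\ell$ is odd or $\ell = 2$. Throughout, suppose for contradiction that $(x,y,z) \in Y^1_D(\Z)^*$, so that $x^2 + 3y^2 = 4Dz^3$ with $xyz \neq 0$, $(y,z)=1$, and (by the defining $\ast$-condition) $\ell \nmid y$ whenever $\ell^2 \mid D$. Since $D$ is cube-free and $3 \nmid D$, for the given prime $\ell \equiv 2 \pmod 3$ we have $v_\ell(D) \in \{1,2\}$.

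For odd $\ell$, I would first reduce the equation modulo $\ell$. Since $\ell \mid D$, this gives $x^2 + 3y^2 \equiv 0 \pmod{\ell}$; as $\ell \equiv 2 \pmod 3$, Lemma \ref{3k+1} shows this congruence has only the trivial solution in $\mathbb{F}_\ell^2$, so $\ell \mid x$ and $\ell \mid y$. In particular $\ell^2 \mid x^2 + 3y^2 = 4Dz^3$, and since $(y,z)=1$ forces $\ell \nmid z$ (and $\ell \nmid 4$), we have $v_\ell(4Dz^3) = v_\ell(D)$. If $v_\ell(D) = 1$ this contradicts $\ell^2 \mid 4Dz^3$; if $v_\ell(D) = 2$, then $\ell^2 \mid D$ and the $\ast$-condition forces $\ell \nmid y$, contradicting $\ell \mid y$. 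Either way $Y^1_D(\Z)^*$ is empty.

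The case $\ell = 2$ requires a short $2$-adic computation in place of Lemma \ref{3k+1}. Reducing modulo $4$ (using $4 \mid 4Dz^3$) gives $x^2 + 3y^2 \equiv 0 \pmod 4$, which forces $x,y$ to have the same parity. If both are odd, then $x^2 + 3y^2 \equiv 1 + 3 = 4 \pmod 8$, while $2 \mid D$ gives $4Dz^3 \equiv 0 \pmod 8$, a contradiction. If both are even, I would write $x = 2x_1$, $y = 2y_1$ to obtain $x_1^2 + 3y_1^2 = Dz^3$, where $z$ is odd because $2 \mid y$ and $(y,z)=1$. When $4 \mid D$ (i.e. $v_2(D) = 2$) the $\ast$-condition already forbids $2 \mid y$, a contradiction; when $v_2(D) = 1$, write $D = 2D'$ with $D'$ odd, so $x_1^2 + 3y_1^2 = 2D'z^3 \equiv 2 \pmod 4$, which is impossible since $a^2 + 3b^2 \not\equiv 2 \pmod 4$ for all integers $a,b$.

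I expect the only mildly delicate point to be the bookkeeping in the case $\ell = 2$: one must correctly track the parity reductions and invoke the $\ast$-condition precisely in the subcase $4 \mid D$, exactly as in Case I of Proposition \ref{improperD}. The odd case is essentially immediate once the non-solvability of $x^2 + 3y^2 \equiv 0 \pmod \ell$ is in hand, so the main (though modest) obstacle is ensuring the $2$-adic casework is exhaustive and that each contradiction is genuine.
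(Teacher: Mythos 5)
Your proposal is correct and is exactly the argument the paper intends: the paper omits this proof with the remark that it is similar to Proposition \ref{improperD}, and your write-up is precisely that adaptation --- using Lemma \ref{3k+1} (the non-solvability of $x^2+3y^2\equiv 0 \pmod{\ell}$ for odd $\ell\equiv 2 \pmod 3$) together with the valuation count forced by $(y,z)=1$, and a mod $4$/mod $8$ analysis for $\ell=2$, invoking the $\ast$-condition exactly in the subcases $v_\ell(D)=2$. All case splits are exhaustive and each contradiction is genuine, so nothing is missing.
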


\begin{proof}
The proof is similar to that of Proposition \ref{improperD}; hence it is omitted.
\end{proof}

\noindent In view of the above discussion, we see that it is enough
to understand the sets $Y^1_D(\Z)^*$ for {\bf admissible} integers
$D$ for which $3 \nmid D$. In the remaining part of the section, we
describe how to obtain $Y^1_D(\Z)^*$ from $Y^1_1(\Z)$ using level
raising and level lowering maps. We first describe the set
$Y^1_p(\Z)$ for a prime $p$ of the form $3k+1$. The case for general
admissible  $D$ for which $3 \nmid D$ is similar, but more
notationally involved.

\begin{defn}[Level Raising By A Prime $p \equiv 1 \pmod{3}$]
Let $p$ be a prime which is congruent to $1$ modulo $3$. Then we can write $p$ uniquely as $p=u^2+3v^2$ and $(u,3v)=1$ with $u, v>0$. We define two level raising maps by $p$ as follows.
 $$[p]^{+} : Y^1_D(\Z) \to Y_{pD}(\Z) \text{ and } [p]^{-} : Y^1_D(\Z) \to Y_{pD}(\Z)$$
 given by
 $$[p]^{+}(x,y,z)=(ux + 3vy, uy - vx, z) \text{ and } [p]^{-}(x,y,z)=(ux - 3vy, uy + vx, z).$$
\end{defn}
\noindent The definition is justified by the following calculation.
$$(ux \pm 3vy)^2 + 3 (uy \mp vx)^2 = (x^2+3y^2)(u^2+3v^2) = 4pDz^3.$$

\begin{defn}[Level Lowering By A Prime $p \equiv 1 \pmod{3}$]\label{defnlevelloweing}
Let $p$ be a prime which is congruent to $1$ modulo $3$. Then we can write $p$ uniquely as $p=u^2+3v^2$ and $(u,3v)=1$ with $u, v>0$. We  define the level lowering map by $p$ as follows
\begin{equation*}
[p]_{\ast} : Y^1_{pD}(\Z)^* \to Y_{D}(\Z)
\end{equation*}
 $$[p]_{\ast}(x,y,z) \mapsto
 \begin{cases}
 \big(\frac{ux - 3vy}{p}, \frac{uy + vx}{p}, z \big) & \text{ if } p \mid (uy+vx), \\
 \big(\frac{ux + 3vy}{p}, \frac{uy - vx}{p}, z \big)  & \text{ if } p \nmid (uy+vx).
 \end{cases}
 $$

\end{defn}

\noindent The definition is justified as follows.\\

\noindent  If $(x,y,z) \in Y^1_{pD}(\Z)$, then
 $$u^2y^2-v^2x^2 \equiv -3v^2y^2 -v^2x^2 =-v^2(x^2+3y^2)=-4pDv^2z^3 \equiv 0 \pmod{p}.$$
 Thus $p$ divides either $(uy+vx)$ or $(uy-vx)$.
 Note that if $(x,y,z) \in Y^1_{pD}(\Z)^*$ and $p$ divides both of $uy+vx$ and $uy-vx$, then it implies that $p$ divides both $x$ and $y$.
Hence if $p \mid D$, then $p^2 \mid pD$ and $p \mid y$ so that
$(x,y,z) \notin Y^1_{pD}(\Z)^*$.
 On the other hand, if $p$ divides both $x$ and $y$ as above, and $p \nmid D$, then $p \mid z$. Hence $(x,y,z) \notin Y^1_{pD}(\Z)$. \\
 So, we conclude that $(x,y,z) \in Y_{pD}^1(\Z)^*$ implies $p$ divides exactly one of $\{ uy+vx, uy-vx \}$. If $p \mid (uy \pm vx)$, then $p$ divides
$u(uy\pm vx)-py = \pm v(ux \mp 3vy)$; hence $p \mid (ux \mp 3vy)$.\\
Finally, note that
$$\Big(\frac{ux \mp 3vy}{p} \Big)^2 + 3 \Big(\frac{uy \pm vx}{p}\Big)^2 = \frac{1}{p^2}(u^2+3v^2)(x^2+3y^2) = 4Dz^3.$$

\begin{lemma}
Let $D$ be an admissible integer and $p \mid D$ be a prime (which is necessarily congruent to $1$ modulo $3$). Then, the level
lowering map $[p]_\ast$ as defined in Definition \ref{defnlevelloweing} maps $Y^1_D(\Z)^*$ to
$Y^1_{\frac{D}{p}}(\Z)^*$.
\end{lemma}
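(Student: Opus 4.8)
The plan is to start from a point $(x_0,y_0,z_0)\in Y^1_D(\Z)^*$, set $D'=D/p$, and write its image as $[p]_\ast(x_0,y_0,z_0)=(x_1,y_1,z_0)$. The algebraic bookkeeping recorded just before the statement already does a good deal: it shows that $p$ divides \emph{exactly} one of $uy_0+vx_0$ and $uy_0-vx_0$, that the two displayed coordinates $x_1,y_1$ are genuine integers, and that $x_1^2+3y_1^2=4D'z_0^3$. So the image lies on the curve $Y_{D'}$, and what remains is to verify the three defining conditions of $Y^1_{D'}(\Z)^*$: coprimality $(y_1,z_0)=1$, the starred divisibility condition for $D'$, and non-degeneracy $x_1y_1z_0\neq 0$.

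The key auxiliary fact I would establish first is that the appropriate level-raising map inverts $[p]_\ast$. A direct computation using $u^2+3v^2=p$ shows $[p]^{+}\circ[p]_\ast=\mathrm{id}$ on the first branch ($p\mid uy_0+vx_0$) and $[p]^{-}\circ[p]_\ast=\mathrm{id}$ on the second. Concretely this gives the recovery formulas $x_0=ux_1\pm 3vy_1$ and $y_0=uy_1\mp vx_1$, with signs matched to the branch. The useful consequence is the implication: if a prime $q$ divides both $x_1$ and $y_1$, then $q$ divides both $x_0$ and $y_0$.

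With the recovery formulas in hand, coprimality and the starred condition are short. For $(y_1,z_0)=1$: if a prime $q$ divides both $y_1$ and $z_0$, then $x_1^2=4D'z_0^3-3y_1^2$ forces $q\mid x_1$, hence $q$ divides $x_0$ and $y_0$; together with $q\mid z_0$ this contradicts $(y_0,z_0)=1$. For the starred condition, suppose $\ell^2\mid D'$; then $\ell^2\mid D$, so $\ell\nmid y_0$ by hypothesis. If $\ell\mid y_1$, then reducing $x_1^2+3y_1^2=4D'z_0^3$ modulo $\ell$ and using $\ell\mid D'$ gives $\ell\mid x_1$, whence $\ell\mid y_0$ by recovery, a contradiction; so $\ell\nmid y_1$. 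Finally $z_0\neq 0$ is inherited, and $x_1\neq 0$ because $x_1=0$ would give $3y_1^2=4D'z_0^3$, which is impossible since $3\nmid D'$ and $3\nmid z_0$ (as $3\nmid D$) force $3\nmid 4D'z_0^3$, while the left-hand side is a multiple of $3$.

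The step I expect to be the real obstacle is the non-vanishing $y_1\neq 0$. Here the recovery formulas degenerate: $y_1=0$ gives $x_0=ux_1$, $y_0=\pm vx_1$ and $x_1^2=4D'z_0^3$. From $(y_0,z_0)=1$ one deduces $(x_1,z_0)=1$, and then $x_1^2=4D'z_0^3$ forces $z_0=1$ with $D'$ a perfect square, say $D'=m^2$, $x_1=2m$, $y_0=\pm 2vm$. At this point I would invoke admissibility together with the starred condition on $D$: any prime $\ell\mid m$ is $\equiv 1\pmod 3$, hence odd, and satisfies $\ell^2\mid m^2\mid D$, yet $\ell\mid y_0$, contradicting $\ell\nmid y_0$ — unless $m=1$. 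Thus $y_1=0$ can survive only in the single degenerate configuration $D=p$, $(x_0,y_0,z_0)=(2u,\pm 2v,1)$, whose level-lowering lands at the trivial solution $(2,0,1)$. I would flag this boundary point as the one place where the clean statement needs to be isolated and handled by a separate argument (consistent with treating the prime level $Y^1_p(\Z)^*$ first and explicitly), while for every admissible $D$ with $D/p>1$ the argument above gives $y_1\neq 0$ and hence $[p]_\ast(x_0,y_0,z_0)\in Y^1_{D/p}(\Z)^*$.
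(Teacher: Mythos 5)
Your verification steps are correct, and where they overlap with the paper's proof they are the same in substance: the paper checks $(y_1,z_0)=1$ by exactly your argument (a common prime divisor of $y_1,z_0$ divides $x_1$, hence divides $y_0=uy_1\mp vx_1$), and it checks the starred condition via the identity $u^2y_0^2-v^2x_0^2=py_0^2-4Dv^2z_0^3\not\equiv 0 \pmod{\ell_1}$, which is the same computation you carry out through the recovery formulas $x_0=ux_1\pm 3vy_1$, $y_0=uy_1\mp vx_1$ coming from $[p]^{\pm}\circ[p]_\ast=\mathrm{id}$.

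Where you depart from the paper is the nonvanishing step, and there you have caught a genuine omission: the paper's proof never verifies $x_1y_1\neq 0$, and your degenerate analysis shows the lemma as stated actually fails at level $D=p$. Your localization is exactly right: $y_1=0$ forces $z_0=1$ and $D/p=m^2$, the starred condition on $D$ kills every prime of $m$, so the only escape is $D=p$ and $(x_0,y_0,z_0)=(\pm 2u,\pm 2v,1)$, which indeed lies in $Y^1_p(\Z)^*=Y^1_p(\Z)$ and maps to the trivial point $(\pm 2,0,1)\notin Y^1_1(\Z)$. Concretely, for $p=7$ ($u=2$, $v=1$) one has $(4,2,1)\in Y^1_7(\Z)^*$ and $[7]_\ast(4,2,1)=(2,0,1)$. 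This defect propagates: Corollary \ref{Y1p} is literally false, since $(2u,2v,1)$ is the image under $[p]^{\pm}$ only of the trivial point $(2,0,1)$ and of no element of $Y^1_1(\Z)$; for $p=7$ the corresponding point $(126,28,21)\in X^7_1(\Z)^*$ gives the irreducible trinomial $X^3-21X+28$, which is unreachable from $Y^1_1(\Z)$ by level raising. (The final lists in Theorems \ref{mainthm} and \ref{mainthm2} happen to recover such trinomials anyway, but only because they are quantified over all coprime $(s,t)$ with $3\nmid(s+t)$, including degenerate parameters such as $(s,t)=(1,1)$ in the first parametrization, where $3st(s-t)=0$ and the underlying point of $Y_1$ is trivial.) Your proposed repair --- prove the lemma for $D/p>1$, and handle the prime level separately by allowing the trivial points $(\pm 2,0,1)$ in the domain of $[p]^{\pm}$ --- is the correct one; your implicit use of $3\nmid D$ in the $x_1\neq 0$ step is consistent with the paper's standing assumption in this section.
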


\begin{proof}
Suppose $(x_0,y_0,z_0) \in Y^1_D(\Z)^*$ and let
$[p]_\ast(x_0,y_0,z_0)=(\tilde{x}, \tilde{y},z_0)$. Suppose that $(\tilde{y},z_0)=d$, then
$(\tilde{x},z_0)=d$, hence $ d \mid \tilde{x}$ and $ d \mid
\tilde{y}$. Thus $d \mid (u \tilde{y} \mp v \tilde{x})$; in
particular, $d \mid y_0$. Thus $d \mid (y_0,z_0)=1$. So,
$[p]_\ast(x_0,y_0,z_0) \in Y^1_{\frac{D}{p}}(\Z)$. Note that $p^2
\nmid \frac{D}{p}$ as $D$ is cube-free. Let $\ell_1 \neq p$ be a
prime such that $\ell_1^2 \mid D$ and $(x_0,y_0,z_0) \in
Y^1_D(\Z)^*$, then $\ell_1 \nmid y_0$. Now, notice that,
$$u^2y_0^2-v^2x_0^2 = py_0^2 -v^2(x_0^2+3y_0^2)=py_0^2-4Dv^2z_0^3 \equiv py_0^2 \neq 0 \pmod{\ell_1}.$$
Note that $\tilde{y}$ is either $(uy_0+vx_0)/p$ or $(uy_0-vx_0)/p$.
Thus $\ell_1 \nmid \tilde{y}$. Hence $(\tilde{x}, \tilde{y},z_0) \in
Y^1_{\frac{D}{p}}(\Z)^*$.
\end{proof}

\noindent
Our aim is to construct $Y^1_D(\Z)^*$  from $Y^1_1(\Z)$ using level
raising maps. For each prime divisors $p$ of $D$, we
apply $p$-level raising maps (twice if $p^2|D$). The main issue is to prove that we have constructed the
whole $Y^1_D(\Z)^*$. For this, we need to use the level lowering
maps. Now we study the image of the level lowering and level
raising map.\\

\noindent  We make some observations which will be proved to be
valid in general in the proposition to follow.\\ Observe that, the
point $(-1,1,1) \in Y_1^1(\Z)$ gives
$$[7]^+(-1,1,1)=(1,3,1)~,~[7]^-(-1,1,1) = (-5,1,1) \in Y_7^1(\Z),$$
moreover $$[7]_\ast \circ [7]^+(-1,1,1) = [7]_\ast \circ [7]^- (-1,1,1)=(-1,1,1),$$
but $$[7]^+ \circ [7]_\ast(1,3,1) = (1,3,1), [7]^- \circ [7]_\ast(1,3,1) = (-5,1,1) \neq (1,3,1).$$
The point $(20,18,7) \in Y_1^1(\Z)$ gives
$$[7]^+(20,18,7) = (94,16,7) \in Y_7^1(\Z)~,~[7]^-(20,18,7) =
(-14,56,7) \in Y_7(\Z) \setminus Y_7^1(\Z)$$
and we have
$$[7]^+ \circ [7]_\ast(94,16,7)=(94,16,7), [7]^- \circ [7]_\ast(94,16,7)= (-14,56,7), [7]_\ast \circ [7]^+(20,18,7)  = (20,18,7).$$
These observations are quite general in nature and the general case
is proved in the proposition below. We remark that if $p^2 \mid D$, then $pD$ is not an admissible integer, hence we are not concerned about the image of a point $(x_0,y_0,z_0) \in Y^1_D(\Z)^*$ under level raising by $p$-map if $p^2 \mid D$.\\

\begin{proposition}\label{behaviourunderlevelmaps}
Let $D$ be an admissible integer, $p \equiv 1 \pmod{3}$ be a prime. Let $(x_0,y_0,z_0)$ be a point in $Y^1_D(\Z)^*$.
\begin{enumerate}
\item If $p \nmid Dz_0$, then both $[p]^\pm(x_0,y_0,z_0)
\in Y^1_{pD}(\Z)^*$. \\Otherwise, exactly one of the two points
$[p]^+(x_0,y_0,z_0),
[p]^-(x_0,y_0,z_0)$ belongs to $Y^1_{pD}(\Z)^*$ if $p^2 \nmid D$.\\
\item If $p \mid D$, then exactly one of $\{ [p]^+ \circ
[p]_\ast (x_0,y_0,z_0), [p]^- \circ [p]_\ast (x_0,y_0,z_0) \}$ is
$(x_0,y_0,z_0)$.\\
\item  Suppose $p^2 \nmid D$. If $[p]^\pm(x_0,y_0,z_0) \in Y^1_{pD}(\Z)^*$, then $[p]_\ast \circ [p]^\pm(x_0,y_0,z_0) = (x_0,y_0,z_0)$.
\end{enumerate}
\end{proposition}

\begin{proof} For convenience of notation, let us write $[p]^{\pm}(x_0,y_0,z_0)=(x_1^\pm, y_1^\pm,z_0)$ and $[p]_\ast(x_0,y_0,z_0) = (\tilde{x},\tilde{y},z_0)$.
 \begin{enumerate}
\item First, consider the case $p \nmid D$.
Suppose $\ell_1^2 \mid D$ (so $\ell_1 \neq p$ as
 we are in the case $p \nmid D$). Since $(x_0,y_0,z_0) \in Y^1_D(\Z)^*$, we have $\ell_1 \nmid y_0$.
We first show that $\ell_1 \nmid (y_1^+ y_1^-)$. Suppose $\ell_1
\mid y_1^\pm$, then $\ell_1$ divides $4pDz_0^3-3(y_1^\pm)^2
=(x_1^\pm)^2$; thus, $\ell_1 \mid x_1^\pm$. Hence $\ell_1$ divides
$uy_1^\pm \pm vx_1^\pm = py_0$, contradiction.

 \noindent Thus, if $p \nmid D$ and $p \nmid z_0$ , then it follows that both $(x_1^\pm, y_1^\pm,z_0) \in Y^1_{pD}(\Z)^*$. We remark
 that the condition $p \nmid z_0$ is essential as otherwise from equation \eqref{pdividesproduct} it follows that
 $p \mid y_1^+ y_1^-$ and hence $p \mid (y_1^+, z_0)$
 or $p \mid (y_1^-,z_0)$ which implies both $(x_1^\pm, y_1^\pm, z_0)$ can not be in $Y^1_{pD}(\Z)^*$.\\
\noindent Now suppose $p \mid Dz_0$ and $p^2 \nmid D$. Note that
\begin{equation}\label{pdividesproduct}
y_1^+ y_1^- =u^2y_0^2- v^2x_0^2 \equiv -v^2(x_0^2+3y_0^2) \equiv -4Dv^2z_0^3 \equiv 0 \pmod{p},
\end{equation}
hence $p$ divides at least one of $y_1^+$ and $y_1^-$.
\noindent If $p \nmid D$, but $p \mid z_0$, then $[p]^\pm(x_0,y_0,z_0) \in Y^1_{pD}(\Z)^*$ iff $p \nmid y_1^\pm$.
If $p \mid y_1^+$ and $p \mid y_1^-$, then $p$ divides $y_1^+ + y_1^- = 2uy_0$,
which is not possible as $p \nmid u$ and $(y_0,z_0)=1$.
Thus $p$ divides exactly one of $y_1^+$ and $y_1^-$.\\
\noindent If $p \mid \mid D$, then $p^2 \mid \mid pD$. Now from the
definition of $Y^1_{pD}(\Z)^*$ it follows that $[p]^\pm(x_0,y_0,z_0)
= (x_1^\pm, y_1^\pm, z_0) \in Y^1_{pD}(\Z)^*$ iff $p \nmid y_1^\pm$.
Now if $p \mid y_1^+$ and $p \mid y_1^-$, then $p$ divides both
$y_1^- + y_1^+=2uy_0$ and $y_1^- - y_1^+ = 2vx_0$. Thus, $p \mid
x_0$ and $p \mid y_0$, which implies that $p^2$  divides
$x_0^2+3y_0^2 = 4Dz_0^3$. This would mean that $p \mid z_0$ as $p^2
\nmid D$. Then $p \mid (y_0,z_0)$, contradiction. Hence $p$ divides
exactly one of $y_1^+$ and $y_1^-$.

\item As $p \mid D$, from equation \eqref{pdividesproduct} it follows that $p$ divides exactly one of
$y_1^-=uy_0+vx_0$ and $y_1^+=uy_0-vx_0$. If $p \mid y_1^-$, then $\tilde{x}= \frac{x_1^-}{p}$ and $\tilde{y}= \frac{y_1^-}{p}$. We obtain
$$[p]^+ \circ [p]_\ast (x_0,y_0,z_0) = [p]^+(\frac{x_1^-}{p}, \frac{y_1^-}{p},z_0) =(\frac{ux_1^-+3vy_1^-}{p}, \frac{uy_1^- - vx_1^-}{p},z_0) =  (x_0,y_0,z_0),$$
$$[p]^- \circ [p]_\ast (x_0,y_0,z_0) = [p]^-(\frac{x_1^-}{p}, \frac{y_1^-}{p},z_0) =(\frac{ux_1^--3vy_1^-}{p}, \frac{uy_1^- + vx_1^-}{p},z_0) =  (x_0-6v\tilde{y},y_0+2v\tilde{x},z_0).$$
Note that $\tilde{x}\tilde{y} \neq 0$ as $(\tilde{x},\tilde{y},z_0) \in Y^1_{\frac{D}{p}}(\Z)^*$, hence $[p]^- \circ [p]_\ast(x_0,y_0,z_0) \neq (x_0,y_0,z_0)$. The case $p \mid y_1^+$ is similar.
\item Suppose that $[p]^+(x_0,y_0,z_0) = (x_1^+,y_1^+,z_0) \in Y^1_{pD}(\Z)^*$. Note that $uy_1^+ + vx_1^+ =
py_0$ and $ux_1^+-3vy_1^+ = px_0$. As $p \mid (uy_1^+ + vx_1^+)$, we obtain
$[p]_{\ast}(x_1^+,y_1^+,z_0)=(x_0,y_0,z_0)$. The case $[p]^-(x_0,y_0,z_0) \in Y^1_{pD}(\Z)^*$ is similar.
\end{enumerate}
\end{proof}

\begin{lemma}\label{induc1}
Let $p$ and $q$ be two distinct prime congruent to 1 modulo $3$. Let $p=u_1^2+3v_1^2$  and $q= u_2^2 + 3v_2^2$. Let $u'=u_1u_2-3v_1v_2, v'=u_2v_1+u_1v_2$ and $u''=u_1u_2+3v_1v_2, v''=u_2v_1-u_1v_2$. We have,
$$[p]^+ \circ [p]^+(x_0,y_0,z_0):= [p]^{++}(x_0,y_0,z_0) = ((u_1^2-3v_1^2)x_0+6u_1v_1y_0, (u_1^2-3v_1^2)y_0-2u_1v_1x_0, z_0),$$
$$[p]^- \circ [p]^-(x_0,y_0,z_0):=[p]^{--}(x_0,y_0,z_0) = ((u_1^2-3v_1^2)x_0-6u_1v_1y_0, (u_1^2-3v_1^2)y_0+2u_1v_1x_0, z_0),$$
$$[p]^+ \circ [q]^+(x_0,y_0,z_0) = [q]^+ \circ [p]^+(x_0,y_0,z_0) =(u'x_0+3v'y_0, u'y_0-v'x_0, z_0),$$
$$[p]^+ \circ [q]^-(x_0,y_0,z_0) = [q]^- \circ [p]^+(x_0,y_0,z_0) =(u''x_0+3v''y_0, u''y_0-v''x_0, z_0),$$
$$[p]^- \circ [q]^+(x_0,y_0,z_0) = [q]^+ \circ [p]^-(x_0,y_0,z_0) =(u''x_0-3v''y_0, u''y_0+v''x_0, z_0),$$
$$[p]^- \circ [q]^-(x_0,y_0,z_0) = [q]^- \circ [p]^-(x_0,y_0,z_0) =(u'x_0-3v'y_0, u'y_0+v'x_0, z_0).$$
\end{lemma}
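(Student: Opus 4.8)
\noindent The plan is to recognize all six identities as statements about multiplication in the commutative ring $R=\Z[\sqrt{-3}]$, whose norm form $N(x+y\sqrt{-3})=x^2+3y^2$ is exactly the left-hand side of the defining equation of $Y_D$. (Note that we need only the ring structure of $R$, not unique factorization; this is also why $\Z[\sqrt{-3}]$, rather than $\Z[\omega]$, is the natural ring here, since its norm form is $x^2+3y^2$.) Writing a triple $(x,y,z)$ as the pair $(x+y\sqrt{-3},\,z)$, a direct comparison of coordinates shows that the level-raising maps act only on the first slot, by multiplication:
$$[p]^+(x,y,z) \longleftrightarrow \big((u-v\sqrt{-3})(x+y\sqrt{-3}),\,z\big), \qquad [p]^-(x,y,z) \longleftrightarrow \big((u+v\sqrt{-3})(x+y\sqrt{-3}),\,z\big).$$
Indeed, expanding $(u-v\sqrt{-3})(x+y\sqrt{-3})=(ux+3vy)+(uy-vx)\sqrt{-3}$ reproduces $[p]^+$, and the analogous expansion with $u+v\sqrt{-3}$ reproduces $[p]^-$. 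Thus $[p]^+$ is multiplication by $\bar\alpha_p$ and $[p]^-$ is multiplication by $\alpha_p$, where $\alpha_p=u+v\sqrt{-3}$ satisfies $N(\alpha_p)=u^2+3v^2=p$.

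\noindent Once this dictionary is fixed, each composition is a single multiplication by a product of two generators, so the lemma reduces to computing six products in $R$. The three equalities asserting that the $p$- and $q$-maps commute are then immediate from commutativity of $R$; no coordinate computation is needed for that half of the statement. For the explicit formulas I would, writing $\alpha_p=u_1+v_1\sqrt{-3}$ and $\alpha_q=u_2+v_2\sqrt{-3}$ as in the lemma, first record the self-products
$$\bar\alpha_p\,\bar\alpha_p=(u_1-v_1\sqrt{-3})^2=(u_1^2-3v_1^2)-2u_1v_1\sqrt{-3}, \qquad \alpha_p\,\alpha_p=(u_1^2-3v_1^2)+2u_1v_1\sqrt{-3},$$
which upon multiplying $x+y\sqrt{-3}$ yield exactly the stated formulas for $[p]^{++}$ and $[p]^{--}$. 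For the mixed products one checks directly from the definitions $u'=u_1u_2-3v_1v_2$, $v'=u_1v_2+u_2v_1$, $u''=u_1u_2+3v_1v_2$, $v''=u_2v_1-u_1v_2$ that
$$\alpha_p\,\alpha_q=u'+v'\sqrt{-3}, \qquad \bar\alpha_p\,\bar\alpha_q=u'-v'\sqrt{-3}, \qquad \alpha_p\,\bar\alpha_q=u''+v''\sqrt{-3}, \qquad \bar\alpha_p\,\alpha_q=u''-v''\sqrt{-3}.$$
Reading off the real and $\sqrt{-3}$-parts after multiplying each by $x+y\sqrt{-3}$ then gives the four remaining formulas.

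\noindent The only point requiring genuine care, and the most error-prone step, is the sign bookkeeping in this last correspondence: because $[p]^+$ is multiplication by the \emph{conjugate} $\bar\alpha_p$ rather than by $\alpha_p$, one must track, for each of the six compositions, which of $\alpha$ or $\bar\alpha$ enters, and then match it against the pair $(u',v')$ versus $(u'',v'')$. Once the identifications $[p]^+\leftrightarrow\bar\alpha_p$ and $[p]^-\leftrightarrow\alpha_p$ are in place, there is no conceptual obstacle: the entire content of the lemma is the associativity and commutativity of $R=\Z[\sqrt{-3}]$ together with the four product computations above.
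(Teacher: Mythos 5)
Your proof is correct, and it takes a genuinely different route from the paper: the paper's proof is literally the one-line assertion that ``a straightforward calculation works,'' i.e.\ a brute-force coordinate verification of all six identities, whereas you organize the computation through the dictionary $[p]^+\leftrightarrow$ multiplication by $\bar\alpha_p=u-v\sqrt{-3}$ and $[p]^-\leftrightarrow$ multiplication by $\alpha_p=u+v\sqrt{-3}$ in $R=\Z[\sqrt{-3}]$. I checked your identifications and products: $(u-v\sqrt{-3})(x+y\sqrt{-3})=(ux+3vy)+(uy-vx)\sqrt{-3}$ does reproduce $[p]^+$, the squares $\bar\alpha_p^2$ and $\alpha_p^2$ give exactly the stated $[p]^{++}$ and $[p]^{--}$ formulas, and the four mixed products $\alpha_p\alpha_q=u'+v'\sqrt{-3}$, $\bar\alpha_p\bar\alpha_q=u'-v'\sqrt{-3}$, $\alpha_p\bar\alpha_q=u''+v''\sqrt{-3}$, $\bar\alpha_p\alpha_q=u''-v''\sqrt{-3}$ are all right (including the sign flip $u_1v_2-u_2v_1=-v''$ hidden in $\bar\alpha_p\alpha_q$, which is exactly the sign bookkeeping you flagged). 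What your approach buys is twofold: the three commutation equalities $[p]^{\epsilon}\circ[q]^{\delta}=[q]^{\delta}\circ[p]^{\epsilon}$ become automatic from commutativity of $R$ rather than requiring separate expansion, and your observation that only the ring structure and the multiplicativity of the norm $N(x+y\sqrt{-3})=x^2+3y^2$ are needed (no unique factorization, hence no need to pass to $\Z[\omega]$) explains structurally why these formulas hold and why the remark following the lemma about representations of $p^2$ and $pq$ as $\alpha^2+3\beta^2$ comes out the way it does. The only caveat, which the paper glosses over equally, is that the composites are being evaluated as polynomial identities in $(x_0,y_0,z_0)$ without worrying about whether the intermediate point lies in $Y^1_{qD}(\Z)$ versus merely $Y_{qD}(\Z)$; since the formulas are purely algebraic this costs nothing, but it is worth a sentence if you write this up.
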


\begin{proof}
A straightforward calculation works.
\end{proof}

\begin{rem}\label{induc2}
Observe that $(u_1^2-3v_1^2, 6u_1v_1)=1$ and $(u_1^2-3v_1^2)^2+ 3(2u_1v_1)^2 =p^2$. This is the unique representation (up to sign) of $p^2$ as $\alpha^2+ 3 \beta^2$ with $(\alpha, 3 \beta)=1$. Similarly, $(u', 3v')=(u'',3v'')=1$ and $u'^2+ 3 v'^2 =u''^2+3v''^2=pq$. Also, these are the only representations
of $pq$ (up to sign) as $\alpha^2+ 3 \beta^2$ with $(\alpha, 3 \beta)=1$.\\
\noindent We also remark that for any $(x_0,y_0,z_0) \in Y^1_{D}(\Z)$ (with $p \nmid D$),  $[p]^+ \circ [p]^-(x_0,y_0,z_0) = [p]^- \circ [p]^+(x_0,y_0,z_0) =
(px_0, py_0, z_0) \notin Y^1_{p^2D}(\Z)^*$.
\end{rem}

\begin{lemma}
Let $D$ be an admissible integer and $p \equiv 1 \pmod{3}$ a prime such that $p \nmid D$. Let $(x_0,y_0,z_0)$ be a point in $Y^1_D(\Z)^*$.
Then, $[p]^{++}(x_0,y_0,z_0) \in Y^1_{p^2D}(\Z)^*$ iff
$[p]^{+}(x_0,y_0,z_0) \in Y^1_{pD}(\Z)^* $. Similarly,
$[p]^{--}(x_0,y_0,z_0) \in Y^1_{p^2D}(\Z)^*$ iff
$[p]^{-}(x_0,y_0,z_0) \in Y^1_{pD}(\Z)^*$.
\end{lemma}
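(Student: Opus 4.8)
The plan is to recognise $[p]^{++}=[p]^+\circ[p]^+$ as a two-step raising through the intermediate level $pD$, and thereby reduce everything to the one-step level-raising and level-lowering results already established, applied ``one level up''. Write $[p]^+(x_0,y_0,z_0)=(x_1^+,y_1^+,z_0)$ (so $x_1^+=ux_0+3vy_0$, $y_1^+=uy_0-vx_0$) and $[p]^{++}(x_0,y_0,z_0)=(x_2^{++},y_2^{++},z_0)$, and record at the outset that, since $p\equiv 1\pmod 3$, $p\nmid D$ and $D$ is cube-free, both $pD$ and $p^2D$ are again admissible with $p\mid\mid pD$. I will prove the $[p]^{++}$ equivalence; the $[p]^{--}$ case is the mirror image.

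For the direction ``$[p]^+(x_0,y_0,z_0)\in Y^1_{pD}(\Z)^*\Rightarrow[p]^{++}(x_0,y_0,z_0)\in Y^1_{p^2D}(\Z)^*$'' I would set $P_1:=[p]^+(x_0,y_0,z_0)\in Y^1_{pD}(\Z)^*$ and apply the level-raising Lemma~3.4 to $P_1$, with its ``$D$'' taken to be $pD$ and raising once more by $p$. Since $p\mid\mid pD$, this is the ``$p\mid\mid D$'' case of that lemma, so \emph{exactly one} of $[p]^+(P_1)$ and $[p]^-(P_1)$ lies in $Y^1_{p^2D}(\Z)^*$. It remains to pin down which: by the Remark above, $[p]^-(P_1)=[p]^-\circ[p]^+(x_0,y_0,z_0)=(px_0,py_0,z_0)$, whose middle coordinate $py_0$ is divisible by $p$ while $p^2\mid p^2D$; hence $[p]^-(P_1)\notin Y^1_{p^2D}(\Z)^*$. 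Therefore the surviving point is $[p]^+(P_1)=[p]^{++}(x_0,y_0,z_0)$, which consequently lies in $Y^1_{p^2D}(\Z)^*$.

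For the converse I would descend rather than raise. By the level-lowering lemma above (with its ``$D$'' replaced by $p^2D$, so that $p\mid p^2D$), the map $[p]_\ast$ sends $Y^1_{p^2D}(\Z)^*$ into $Y^1_{pD}(\Z)^*$; thus it suffices to show $[p]_\ast\big([p]^{++}(x_0,y_0,z_0)\big)=[p]^+(x_0,y_0,z_0)$. Substituting the explicit formula $x_2^{++}=(u^2-3v^2)x_0+6uvy_0$, $y_2^{++}=(u^2-3v^2)y_0-2uvx_0$ and using $u^2+3v^2=p$, one obtains the two identities $ux_2^{++}-3vy_2^{++}=p\,x_1^+$ and $uy_2^{++}+vx_2^{++}=p\,y_1^+$. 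The second shows $p\mid(uy_2^{++}+vx_2^{++})$, so $[p]_\ast$ selects its first branch and returns $\big(\tfrac{ux_2^{++}-3vy_2^{++}}{p},\tfrac{uy_2^{++}+vx_2^{++}}{p},z_0\big)=(x_1^+,y_1^+,z_0)=[p]^+(x_0,y_0,z_0)$. Combined with the level-lowering lemma this gives $[p]^+(x_0,y_0,z_0)\in Y^1_{pD}(\Z)^*$, closing the equivalence.

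The $[p]^{--}$ statement runs in exact parallel: in the first direction one discards the composite $[p]^+\circ[p]^-(x_0,y_0,z_0)=(px_0,py_0,z_0)$ via the same Remark, and in the second one uses $ux_2^{--}+3vy_2^{--}=p\,x_1^-$ and $uy_2^{--}-vx_2^{--}=p\,y_1^-$; here membership $[p]^{--}(x_0,y_0,z_0)\in Y^1_{p^2D}(\Z)^*$ forces $p\nmid y_2^{--}$, whence $p\nmid x_2^{--}$ from the curve equation read mod $p$, so $p\nmid(uy_2^{--}+vx_2^{--})$ and $[p]_\ast$ correctly takes its \emph{second} branch, returning $[p]^-(x_0,y_0,z_0)$. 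I expect the only genuinely delicate point to be exactly this branch bookkeeping for $[p]_\ast$, together with the use of the Remark to rule out the degenerate product $(px_0,py_0,z_0)$; beyond that the argument is substitution into the level-raising and level-lowering lemmas and the composition formulas already in hand.
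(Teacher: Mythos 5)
Your proof is correct, but it takes a genuinely different route from the paper's. The paper proves the lemma by direct congruence computation: writing $[p]^{\pm\pm}(x_0,y_0,z_0)=(x_2^{\pm},y_2^{\pm},z_0)$, it first shows $\ell_1\nmid y_2^{\pm}$ for every prime $\ell_1$ with $\ell_1^2\mid D$ (so the star condition away from $p$ holds unconditionally), and then splits on $z_0$: if $p\nmid z_0$, the product congruence $y_2^{+}y_2^{-}\equiv -4(2uv)^2Dz_0^3 \pmod{p}$ shows both sides of the equivalence hold simultaneously, while if $p\mid z_0$ the key congruence $y_2^{\pm}\equiv \mp 2v\,x_1^{\pm}\pmod{p}$ ties the star condition at level $p^2D$ directly to $p\nmid x_1^{\pm}$, i.e.\ to membership of $[p]^{\pm}(x_0,y_0,z_0)$ in $Y^1_{pD}(\Z)^*$. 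You instead view $[p]^{++}$ as a two-step raising through the intermediate level $pD$ and reassemble the statement from the structural lemmas already proved: the ``$p\mid\mid D$'' case of the level-raising lemma together with the Remark's degenerate composite $[p]^{-}\circ[p]^{+}(x_0,y_0,z_0)=(px_0,py_0,z_0)$ gives one implication, and the identities $ux_2^{++}-3vy_2^{++}=px_1^{+}$, $uy_2^{++}+vx_2^{++}=py_1^{+}$ (and their mirrors) show $[p]_\ast\circ[p]^{++}=[p]^{+}$, so the level-lowering lemma gives the converse. Your route buys modularity --- no fresh case analysis on whether $p\mid z_0$, and the descent structure is transparent --- whereas the paper's computation is self-contained and yields the additional observation that when $p\nmid z_0$ both signs survive at both levels. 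One small repair is needed in your $[p]^{--}$ branch bookkeeping: from $p\nmid y_2^{--}$ and $p\nmid x_2^{--}$ alone one cannot conclude $p\nmid(uy_2^{--}+vx_2^{--})$; the valid one-line argument is that $p\mid(uy_2^{--}-vx_2^{--})=py_1^{-}$, so if $p$ also divided $uy_2^{--}+vx_2^{--}$ then $p\mid 2uy_2^{--}$ and hence $p\mid y_2^{--}$ (as $0<u<p$), contradicting the star condition at $p$ --- equivalently, invoke the paper's observation, made in justifying the definition of $[p]_\ast$, that a point of the star set has $p$ dividing exactly one of $uy+vx$ and $uy-vx$. With that phrase corrected, your argument is complete and non-circular, since every lemma you cite precedes this one in the paper.
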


\begin{proof}
 Let $(x_0,y_0,z_0) \in Y^1_{D}(\Z)^*$. For convenience of notation, let us write $[p]^{\pm}(x_0,y_0,z_0)=(x_1^\pm, y_1^\pm,z_0)$ and $[p]^{\pm \pm}(x_0,y_0,z_0)=(x_2^\pm, y_2^\pm,z_0)$.
 Suppose $\ell_1$ is a prime such that $\ell_1^2 \mid D$ (this necessarily mean $\ell_1 \neq p$). Then, $\ell_1 \nmid y_0$ by the definition of $Y^1_{D}(\Z)^*$. We first show that $\ell_1 \nmid y_2^+ y_2^-$. Suppose $\ell_1 \mid y_2^\pm$,
  then $\ell_1^2 \mid 4p^2Dz_0^3 - 3 (y_2^\pm)^2 = (x_2^\pm)^2$, hence $\ell_1 \mid x_2^\pm$. As a consequence, $ \ell_1$ divides $(u^2-3v^2)y_2^\pm \pm 2uv x_2^\pm = p^2y_0$, a contradiction.\\

\noindent Now note that
$$y_2^+ y_2^- = (u^2-3v^2)^2 y_0^2- (2uv)^2x_0^2 \equiv -(2uv)^2(x_0^2+3y_0^2) \equiv -4(2uv)^2Dz_0^3 \pmod{p}.$$
Thus if $p \nmid z_0$, then $p \nmid y_2^+ y_2^-$, hence both $[p]^{++}(x_0,y_0,z_0), [p]^{--}(x_0,y_0,z_0) \in Y^1_{p^2D}(\Z)^*$. Recall that (by Proposition \ref{behaviourunderlevelmaps}) in this case both $[p]^{\pm}(x_0,y_0,z_0) \in Y^1_{p^2D}(\Z)^*$.\\
Now suppose that $p \mid z_0$. Then $[p]^\pm(x_0,y_0,z_0) \in
Y^1_{pD}(\Z)^*$ iff $p \nmid y_1^\pm$, which is equivalent to $p
\nmid x_1^\pm$. Then
$$y_2^\pm = (u^2-3v^2)y_0 \mp 2uvx_0 \equiv \mp 2v(ux_0 \pm 3vy_0) = \mp 2vx_1^\pm \pmod{p}.$$
Thus if $p \mid z_0$, then $(x_2^\pm, y_2^\pm,z_0) \in
Y^1_{p^2D}(\Z)^*$ iff $p \nmid y_2^\pm$ which is equivalent to $p
\nmid x_1^\pm$ which, in turn, is equivalent to $(x_1^\pm, y_1^\pm,
z_0) \in Y^1_{pD}(\Z)^*$.
\end{proof}

\begin{lemma}
Let $D$ be an admissible integer with $3 \nmid D>1$. Then every
element of $Y_D^1(\Z)^*$ is the image of some element of
$Y_1^1(\Z)$. More precisely, let $R_D= \{(u_j, v_j) \in \Z^2 \mid
D=u_j^2+3v_j^2, (u_j, 3v_j)=1, u_j>0, v_j >0 \}$. Then every element
of $Y_D^1(\Z)^*$ is of the form $(u_jx_0 \pm 3v_jy_0, u_jy_0 \mp
v_jx_0, z_0)$ for some $(x_0,y_0,z_0) \in Y_1^1(\Z)$ and some
$(u_j,v_j) \in R_D$.
\end{lemma}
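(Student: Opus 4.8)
The plan is to induct on $\Omega(D)$, the number of prime factors of $D$ counted with multiplicity, peeling off one prime at a time with a level-lowering map and re-attaching it with a level-raising map. The base cases are already in hand: $\Omega(D)=0$ means $D=1$, where there is nothing to prove, and $\Omega(D)=1$ means $D=p$ is a prime (necessarily $p\equiv 1\pmod 3$, since $D$ is admissible with $3\nmid D$), which is exactly Corollary \ref{Y1p}. Note that admissibility guarantees every prime factor $p\mid D$ is $\equiv 1\pmod 3$, so each admits a primitive representation $p=u^2+3v^2$ with $(u,3v)=1$, and the level-raising and level-lowering maps by $p$ are all available.

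For the inductive step, suppose $\Omega(D)=n\ge 2$, fix $(a,b,c)\in Y_D^1(\Z)^*$, and choose a prime $p\mid D$. First I would apply the level-lowering map: by the lemma that $[p]_\ast$ carries $Y_D^1(\Z)^*$ into $Y_{D/p}^1(\Z)^*$, the point $(a',b',c):=[p]_\ast(a,b,c)$ lies in $Y_{D/p}^1(\Z)^*$, and $\Omega(D/p)=n-1$. By the induction hypothesis, $(a',b',c)$ has the explicit form $(Ux_0\pm 3Vy_0,\,Uy_0\mp Vx_0,\,z_0)$ for some $(x_0,y_0,z_0)\in Y_1^1(\Z)$, where $D/p=U^2+3V^2$ with $(U,3V)=1$. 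Next I would re-raise by $p$: by the lemma asserting that exactly one of $[p]^+\circ[p]_\ast$, $[p]^-\circ[p]_\ast$ fixes a point of $Y_D^1(\Z)^*$, there is a unique sign $\epsilon\in\{+,-\}$ with $[p]^{\epsilon}(a',b',c)=(a,b,c)$. Thus $(a,b,c)$ is the image of $(x_0,y_0,z_0)\in Y_1^1(\Z)$ under the composite of the parameter-$(U,V)$ raising followed by a single $[p]^{\epsilon}$.

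It then remains to recognize this two-step composite as a single level-raising of the advertised shape. The inner map $(x_0,y_0,z_0)\mapsto(Ux_0\pm 3Vy_0,\,Uy_0\mp Vx_0,\,z_0)$ is formally identical to a single level-raising with parameters $(U,V)$, so by the same bilinear identity proved in the composition lemma above, its composite with $[p]^{\epsilon}$ (parameters $(u,v)$, $p=u^2+3v^2$) is again a map $(x_0,y_0,z_0)\mapsto(u_jx_0\pm 3v_jy_0,\,u_jy_0\mp v_jx_0,\,z_0)$, whose parameter pair $(u_j,v_j)$ is obtained by the product rule and satisfies $u_j^2+3v_j^2=p\cdot(D/p)=D$. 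By the remark following that lemma, this product pair is primitive, $(u_j,3v_j)=1$, and realizes one of the $r_D$ admissible representations of $D$. This is precisely the required form, completing the induction.

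The main obstacle I anticipate is not the induction scaffolding but the verification in the last paragraph that the two-step composite collapses to a genuine primitive representation of $D$ and never degenerates to the map $(x_0,y_0,z_0)\mapsto(px_0,py_0,z_0)$ arising from the mixed composite $[p]^+\circ[p]^-$, which the remark notes leaves $Y_{p^2D}^1(\Z)^*$. This degeneracy is exactly what the ``exactly one sign'' lemma excludes: because $[p]_\ast$ followed by the correct $\epsilon$ genuinely returns to the primitive point $(a,b,c)\in Y_{pD}^1(\Z)^*$, rather than to a point divisible by $p$ in both of its first two coordinates, the surviving composite is a bona fide level-raising. The one case demanding extra care is $p^2\mid D$, where $D/p$ is still divisible by $p$; there I would invoke the remark's computation that $p^2$ has the primitive representation $(u^2-3v^2)^2+3(2uv)^2$ with $(u^2-3v^2,6uv)=1$, so that multiplying by one further factor of $p$ preserves primitivity of the parameter pair and the inductive description goes through unchanged.
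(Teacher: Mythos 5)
Your proposal is correct and takes essentially the same route as the paper: the paper's proof likewise level-lowers by $[p]_\ast$, invokes the identity that exactly one of $[p]^{\pm}\circ[p]_\ast$ is the identity on $Y_D^1(\Z)^*$ to re-raise, and then iterates (``continuing this way''), which is precisely the induction on $\Omega(D)$ you formalize. Your final paragraph, checking via the composition lemma and the primitivity remark that the composite collapses to a single raising with a primitive parameter pair and never degenerates to $(x,y,z)\mapsto(px,py,z)$, is a careful spelling-out of a point the paper leaves implicit.
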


\begin{proof}
The lemma follows from the previous lemmata in this section but, in
order to make the proof more transparent, we give precise details
here. Note that $D$ is a cube-free integer $>1$ which is a product
of primes of the form $3k+1$. It suffices to prove the more precise
assertion:\\
{\it Claim.} Every element of $Y_D^1(\Z)^*$ is of the form $(ux_0
\pm 3vy_0, uy_0 \mp vx_0, z_0)$ for some $(x_0,y_0,z_0) \in
Y_1^1(\Z)$ and some $(u,v) \in R_D$. \\
To prove this claim, we apply induction on $\Omega(D)$,
the number of prime factors of $D$ counted with multiplicity.\\

\noindent If $\Omega(D)=1$, then $D=p$, a prime congruent to $1$ modulo $3$. So
    $$R_p = \{ (u, v) \mid p= u^2+3v^2, u>0, v>0, (u,3v)=1 \}.$$
Let $(x,y,z) \in Y_D^1(\Z)^*$. Then $xyz \neq 0, x^2+3y^2=4pz^3, (y,z)=1$.
As we already observed, $p=u^2+3v^2$ for unique positive integers $u,v$ such that $(u,3v)=1$. Hence, $(u,v) \in R_p$.
Further, as we have shown in the proof of Proposition 3.5, $p$ divides exactly one of the integers $uy-vx, uy+vx$. \\
If $uy \equiv vx$ mod $p$, then $uy + vx \not\equiv 0$ mod $p$, and $ux+3vy \equiv 0$ mod $p$. \\
If $uy \equiv -vx$ mod $p$, then $uy - vx \not\equiv 0$ mod $p$, and $ux-3vy \equiv 0$ mod $p$. \\
Thus, if $uy \equiv vx$ mod $p$, then $(x_1,y_1,z_1) := \bigg( \frac{ux+3vy}{p}, \frac{uy-vx}{p}, z \bigg) \in Y_1^1(\Z)$ and
$$[p]^-(x_1,y_1,z_1) = (ux_1-3vy_1, uy_1+vx_1, z) = (x,y,z).$$
Similarly, if $uy \equiv -vx$ mod $p$, then $(x_1,y_1,z_1) := \bigg( \frac{ux-3vy}{p}, \frac{uy+vx}{p}, z \bigg) \in Y_1^1(\Z)$ and
$$[p]^+(x_1,y_1,z_1) = (ux_1+3vy_1, uy_1-vx_1, z) = (x,y,z).$$
This proves the claim when $\Omega(D)=1$.

\noindent Now, let $\Omega(D) > 1$ and assume that the statement holds for admissible integers $A$ co-prime to $3$
for which $\Omega(A) < \Omega(D)$.
In other words, we assume for such $A$ that every element of
$Y_A^1(\Z)^*$ is of the form $(ux_0 \pm 3vy_0, uy_0 \mp
vx_0, z_0)$ for some $(x_0,y_0,z_0) \in Y_1^1(\Z)$ and some $(u,v) \in R_A$. \\
Write $D=Aq$ for a prime $q= u_1^2 + 3 v_1^2$ congruent to $1$
modulo $3$. There are two possibilities: either $q \mid A$ or $q
\nmid A$.\\

\noindent First, we assume that $q \nmid A$. Now, if $(u,v) \in
R_A$, then $u^2+3v^2=A, (u, 3v)=1, u>0, v>0\}.$
   Then $(|u^+|,|v^-|), (|u^-|,|v^+|) \in R_{Aq}$,
    where $u^\pm = u u_1 \pm 3v v_1, v^\pm = v u_1 \mp u v_1$. \\
Let $(x,y,z) \in Y_{Aq}^1(\Z)^{\ast}$. Then,
$$[q]_{\ast}(x,y,z) = (x_1,y_1,z) \in Y_A^1(\Z)^{\ast},$$
where $x_1 = \frac{u_1x \pm 3v_1y}{q}, y_1 = \frac{u_1y \mp v_1x}{q}$
and the signs are such that the entries are integers.\\
By induction hypothesis, any element of $Y_{A}^1(\Z)^*$ is of the
form $(u x_0 \pm 3vy_0, uy_0 \mp vx_0, z_0)$ for some $(x_0,y_0,z_0)
\in Y_1^1(\Z)$ and some $(u,v) \in R_A$. Therefore, $(x_1,y_1,z) =
(u x_0 \pm 3vy_0, uy_0 \mp vx_0, z_0)$ for some $(x_0,y_0,z_0) \in
Y_1^1(\Z)$ and some $(u,v) \in R_A$. Now $[q]^{\mp}(x_1,y_1,z_1) =
(x,y,z)$ where the signs are as in $x_1 = \frac{u_1x \pm 3v_1y}{q},
y_1 = \frac{u_1y \mp v_1x}{q}$.\\ In order to not confuse with the
sign appearing in $(x_1,y_1,z) = (u x_0 \pm 3vy_0, uy_0 \mp vx_0,
z_0)$, we consider the two cases separately: (i) when $x_1 =
\frac{u_1x + 3v_1y}{q}, y_1 = \frac{u_1y - v_1x}{q}$, and (ii)
when $x_1 = \frac{u_1x - 3v_1y}{q}, y_1 = \frac{u_1y + v_1x}{q}$.\\
In case (i), we have
$$x = u_1x_1 - 3v_1y_1 = u_1(u x_0 \pm 3vy_0) - 3v_1 (uy_0 \mp
vx_0)$$ $$= (u_1u \pm 3vv_1)x_0 + 3 (\pm vu_1 - v_1u) y_0 = u_2x_0 +
3v_2y_0$$
where $u_2 = u_1u \pm 3vv_1, v_2 = \pm vu_1 - v_1u$.\\
Also, in this case (i), we have
$$y = u_1y_1 + v_1x_1 = u_1(u y_0 \mp vx_0)+ v_1(u x_0 \pm 3vy_0)$$
$$= (\mp vu_1 + v_1u)x_0 + (u_1u \pm 3vv_1)y_0 = -v_2x_0 + u_2y_0.$$
As $(x_0,y_0,z) \in Y_1^1(\Z)$ implies $(\pm x_0, \pm y_0,z) \in
Y_1^1(\Z)$, we may take $u_3=|u_2|$ and $v_3=|v_2|$ such that
$(x,y,z) = (u_3x_0'+ v_3y_0', u_3y_0'-v_3x_0',z)$. This proves the
claim in case (i). The case (ii) is completely analogous.\\

\noindent Finally, we consider the second possibility $q \mid A$;
hence $D = Bq^2$ where $q \nmid B$ (as $D$ is cube-free) and $B$ is
an admissible integer not divisible by $3$. \\
Write $q= u_1^2+3v_1^2$ as before; we have $q^2 = \alpha^2+ 3
\beta^2$ where $\alpha = u_1^2-3v_1^2, \beta = 2u_1v_1$.\\
Let $(x,y,z) \in Y_{Bq^2}^1(\Z)^{\ast}$. So, $x^2+3y^2=4Bq^2z^3$,
and we have then
$$(\alpha y + \beta x)(\alpha y - \beta x) = \alpha^2 y^2 - \beta^2 x^2
\equiv (\alpha^2 + 3 \beta^2) \equiv 0~~mod~~q^2.$$ So $q$ divides
one of $\alpha y \pm \beta x$. If it divides both, then $ q
\mid y$ which is a contradiction to the fact $(x,y,z) \in Y_{Bq^2}^1(\Z)^{\ast}$
(as this implies $q \nmid y$ as $q^2 \mid D=Bq^2$). Hence $q$ divides exactly
one of $\alpha y \pm \beta x$.\\
Consider first the case when $q$ divides $\alpha y - \beta x$ and
does not divide $\alpha y + \beta x$. Then the fact that $q^2$
divides $\alpha^2y^2 - \beta^2x^2$ implies that $q^2$ divides
$\alpha y - \beta x$.\\
Again, the equality $(\alpha x + 3 \beta y)^2 + 3(\alpha y  -\beta
x)^2 = 4Bq^4$ gives that $q^2$ divides $\alpha x + 3 \beta y$.\\
Observe
$$(x_1,y_1,z) := \bigg( \frac{\alpha x + 3 \beta y}{q^2},
\frac{\alpha y - \beta x}{q^2}, z\bigg) \in Y_B^1(\Z)^{\ast}.$$ By
induction hypothesis, one can write
$$(x_1,y_1,z) = (ux_0 \pm 3v y_0, uy_0 \mp
vx_0,z)$$ where $(x_0,y_0,z) \in Y_1^1(\Z)$ and $(u,v) \in R_B$. We
have
$$[q]^{--}(x_1,y_1,z)= (\alpha x_1 - 3 \beta y_1, \alpha y_1 + \beta x_1,z)
= (x,y,z).$$ Putting $(x_1,y_1,z) = (ux_0 \pm 3v y_0, uy_0 \mp
vx_0,z)$, we have
$$(x,y,z) = (x_0(\alpha u \pm 3 \beta v) + 3y_0(\pm \alpha v - \beta
u), y_0(\alpha u \pm 3 \beta v) - x_0(\pm \alpha v - \beta u),z) =
u_2x_0 + 3v_2 y_0, u_2 y_0 - v_2 x_0,z)$$ where $(u_2=\alpha u \pm 3
\beta v, v_2 =\pm \alpha v - \beta u$. We change the signs of
$x_0,y_0$ to ensure that $u_2,v_2$ are positive. Note that then
$(u_2,v_2) \in R_{Bq^2}$.\\
The case when $q$ divides $\alpha y + \beta x$ and does not divide
$\alpha y - \beta x$ is completely analogous; we will use $[q]^{++}$
in that case.\\
Hence, the lemma is proved.
\end{proof}

\vskip 3mm

\begin{proposition}\label{X1D}
Let $D$ be an admissible integer with $3 \nmid D$. Suppose
$(x_0,y_0,z_0) \in Y^1_1(\Z)$ and $(u_j,v_j) \in R_D$ where $R_D$ is
as in the lemma above. Then
$$((u_jx_0  - 3v_jy_0) ,  ( u_jy_0 + v_jx_0), z_0)  \in Y_D^1(\Z)^*
\Leftrightarrow (D,z_0, u_j y_0 + v_jx_0)=1;$$
$$(  (u_jx_0  + 3v_jy_0) ,
 ( u_jy_0 - v_jx_0), z_0)  \in Y_D^1(\Z)^* \Leftrightarrow (D,z_0, u_j y_0
- v_jx_0)=1.$$
\end{proposition}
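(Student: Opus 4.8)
The plan is to prove both displayed equivalences directly and elementarily, treating them symmetrically; I concentrate on the first and abbreviate $X_1 = u_jx_0 - 3v_jy_0$, $Y_1 = u_jy_0 + v_jx_0$, so that the candidate point is $P=(X_1,Y_1,z_0)$. The norm identity $(u_jx_0-3v_jy_0)^2+3(u_jy_0+v_jx_0)^2=(x_0^2+3y_0^2)(u_j^2+3v_j^2)=4Dz_0^3$ shows that $P$ lies on $Y_D$ automatically, so $P\in Y_D^1(\Z)^*$ reduces to three conditions: (i) $X_1Y_1z_0\neq 0$; (ii) $(Y_1,z_0)=1$; (iii) every prime $\ell$ with $\ell^2\mid D$ satisfies $\ell\nmid Y_1$. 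The goal is to show that, for $D>1$, the conjunction (i)$\wedge$(ii)$\wedge$(iii) is equivalent to $(D,z_0,Y_1)=1$ (for $D=1$ there is no representation with $v_j>0$, so the statement is vacuous). Taking $D$ prime recovers Corollary \ref{Y1p}.

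The engine of the argument is the pair of linear identities $u_jX_1+3v_jY_1=Dx_0$ and $u_jY_1-v_jX_1=Dy_0$, obtained by direct expansion; combined with the base equation $x_0^2+3y_0^2=4z_0^3$ and the primitivity $(y_0,z_0)=1$ inherited from $Y_1^1(\Z)$, they let me transfer divisibilities between $(X_1,Y_1)$ and $(x_0,y_0,z_0)$. I also record that every prime $\ell\mid D$ is $\equiv 1\pmod 3$ (in particular $\ell\geq 7$) and satisfies $\ell\nmid u_j$ and $\ell\nmid v_j$: indeed $\ell\mid u_j$ would give $\ell\mid 3v_j^2$, hence $\ell\mid v_j$ as $\ell\neq 3$, against $(u_j,3v_j)=1$, and symmetrically. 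One implication is then immediate: if $P\in Y_D^1(\Z)^*$ then (ii) holds, so $(D,z_0,Y_1)\mid (z_0,Y_1)=1$.

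The content is the reverse implication $(D,z_0,Y_1)=1\Rightarrow$ (ii)$\wedge$(iii). For (ii), suppose a prime $\ell$ divides both $Y_1$ and $z_0$. The norm relation $X_1^2+3Y_1^2=4Dz_0^3$ forces $\ell\mid X_1$, and then the two linear identities give $\ell\mid Dx_0$ and $\ell\mid Dy_0$. If $\ell\mid D$ this makes $\ell\mid(D,z_0,Y_1)$, a contradiction; if $\ell\nmid D$ then $\ell\mid x_0$ and $\ell\mid y_0$, which together with $\ell\mid z_0$ contradicts $(y_0,z_0)=1$. For (iii), suppose a prime $\ell\mid D$ divides $Y_1$. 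Since $\ell\nmid v_j$, the congruence $v_jx_0\equiv -u_jy_0\pmod\ell$ coming from $\ell\mid Y_1$ gives $v_j^2(x_0^2+3y_0^2)\equiv (u_j^2+3v_j^2)y_0^2=Dy_0^2\equiv 0\pmod\ell$; as $\ell\nmid 4v_j^2$ and $x_0^2+3y_0^2=4z_0^3$, this yields $\ell\mid z_0$, so again $\ell\mid(D,z_0,Y_1)$, a contradiction. Hence no prime of $D$ divides $Y_1$ at all, which is stronger than (iii).

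It remains to secure the nonvanishing (i) under the gcd hypothesis. Here $z_0\neq 0$ is given; if $Y_1=0$ then $(u_j,v_j)=1$ forces $x_0=u_jk$, $y_0=-v_jk$ and $X_1=Dk$, so $4z_0^3=Dk^2$, while $(D,z_0,Y_1)=(D,z_0)=1$ gives $D\mid 4$, impossible for admissible $D>1$; the case $X_1=0$ is identical after writing $y_0=u_jm$, $x_0=3v_jm$. The second displayed equivalence, for the ``$+$'' form $(u_jx_0+3v_jy_0,\,u_jy_0-v_jx_0,\,z_0)$, follows verbatim upon replacing $v_j$ by $-v_j$, the companion identities then reading $u_jX_1'-3v_jY_1'=Dx_0$ and $u_jY_1'+v_jX_1'=Dy_0$. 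I expect the main obstacle to be the reverse implication of the previous paragraph: recognizing that the deliberately weak triple-gcd condition $(D,z_0,Y_1)=1$ automatically upgrades to the full coprimality $(Y_1,z_0)=1$ together with the square condition. This upgrade is not formal, and uses in an essential way the norm identity, the two linear identities, and the primitivity $(y_0,z_0)=1$ of the starting point.
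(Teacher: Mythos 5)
Your proof is correct and rests on essentially the same mechanism as the paper's: the norm identity $X_1^2+3Y_1^2=4Dz_0^3$, the linear relations $u_jX_1+3v_jY_1=Dx_0$ and $u_jY_1-v_jX_1=Dy_0$, and the primitivity $(y_0,z_0)=1$ -- indeed the paper's converse argument (a prime $q\mid(Y_1,z_0)$ with $q\nmid D$ forces $q\mid x_0$, $q\mid y_0$, contradicting $(y_0,z_0)=1$) is exactly your case $\ell\nmid D$ of step (ii). If anything your writeup is more complete than the paper's: you explicitly verify the starred condition (showing $\ell\mid D$ and $\ell\mid Y_1$ would give $v_j^2\cdot 4z_0^3\equiv Dy_0^2\equiv 0\pmod{\ell}$, hence $\ell\mid z_0$, violating $(D,z_0,Y_1)=1$) and the nonvanishing $X_1Y_1\neq 0$, two points the paper's proof leaves implicit -- its claimed equivalence between membership in $Y_D^1(\Z)$ and in $Y_D^1(\Z)^*$ is asserted but only the gcd step is actually argued there.
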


\begin{proof}
First, suppose $(  (u_jx_0  - 3v_jy_0) ,  ( u_jy_0 + v_jx_0), z_0)
\in Y_D^1(\Z)^*$, where $(x_0,y_0,z_0) \in Y^1_1(\Z)$.  Then, $(
u_jy_0 + v_jx_0 , z_0) = 1$ which evidently implies $(D,z_0, u_j y_0
+ v_jx_0)=1$. Conversely, suppose $(D,z_0, u_j y_0 + v_jx_0)=1$. We
will show that
$$((u_jx_0  - 3v_jy_0) , ( u_jy_0 + v_jx_0), z_0)
\in Y_D^1(\Z)^* \Leftrightarrow$$
$$(  (u_jx_0  - 3v_jy_0) ,  (
u_jy_0 + v_jx_0), z_0) \in Y_D^1(\Z).$$ Assume that $( (u_jx_0 -
3v_jy_0) , ( u_jy_0 + v_jx_0), z_0)  \in Y_D^1(\Z)$ but that $(
(u_jx_0  - 3v_jy_0) ,  ( u_jy_0 + v_jx_0), z_0)  \not\in
Y_D^1(\Z)^*$. Then, $(u_jy_0 + v_jx_0, z_0) > 1$ while $(D,z_0, u_j
y_0 + v_jx_0)=1$. Let $q$ be a prime dividing $(u_jy_0 + v_jx_0 ,
z_0)$ whereas $q \nmid D$. But, then
$$(u_j x_0 - 3v_j y_0)^2 + 3 (u_jy_0 + v_j x_0)^2 = 4Dz_0^3 \Rightarrow
q | (u_j x_0 - 3v_j y_0).$$
Hence, $q|(u_j(u_jx_0-3v_jy_0) +
3v_j(u_jy_0+v_jx_0))$; i.e, $q|Dx_0$. So, $q|x_0$ and hence
$q|u_jy_0$ as well as $q|3v_jy_0$. As $(u_j,3v_j)=1$, we get
$q|y_0$. This implies $q$ divides $x_0^2 + 3y_0^2 = 4z_0^3$ and
hence $q|z_0$ which is a contradiction to $(y_0,z_0)=1$. Therefore,
we have shown that $(  (u_jx_0  - 3v_jy_0) ,  ( u_jy_0 + v_jx_0),
z_0)  \in Y_D^1(\Z)^*$ iff $(  (u_jx_0  - 3v_jy_0) , ( u_jy_0 +
v_jx_0), z_0)  \in Y_D^1(\Z)$ iff $(D,z_0, u_j y_0 +
v_jx_0)=1$.\\
The other assertion is completely similar.
\end{proof}

\section{Irreducible trinomials up to rational equivalence.}

Firstly, we describe the sets $X^D_1(\Z)^*$. To do this, we
essentially keep track of the results proved in Sections 2 and 3.
\vskip 3mm

\begin{theorem}\label{mainthm}
Let $D>1$ be an admissible integer with $3 \nmid D$. let $R_D= \{(u_j, v_j) \in \Z^2 \mid D=u_j^2+3v_j^2, u_j>0, v_j >0, (u_j, 3v_j)=1\}$ and $r_D=|R_D|$. Then, the set $X_1^D(\Z)^* =
\bigcup_{j=1}^{r_D} X^{D(j)}$ where $X^{D(j)}$ is given by
$$
\{ (9D(u_jy \pm v_jx) , D(u_jx \mp 3v_jy)  , 3Dz) \mid (x,y,z) \in
Y^1_1(\Z) , (D,z, u_jy \pm v_jx) =1 \}  \sqcup$$ $$\{ (D(u_jx \mp
3v_jy), D \bigg(\frac{(u_jy \pm v_jx)}{3} \bigg) , Dz) \mid (x,y, z)
\in Y^1_1(\Z), 3 \mid (u_jy \pm v_jx), (D,z, u_jy \pm v_jx) =1 \}.$$

\noindent The set $X_1^{9D}(\Z)^* = \bigcup_{j=1}^{r_D} X^{D(j)}$
where $X^{D(j)}$ is given by
$$\{ (27D(u_jx \mp 3v_jy), 9D(u_jy \pm v_jx), 9Dz) \mid (x,y,z) \in Y^1_1(\Z) , 3 \nmid (u_jy \pm v_jx), (D,z, u_jy \pm v_jx) =1 \}.$$

\end{theorem}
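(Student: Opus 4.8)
The plan is to assemble the theorem directly from the bijections and parametrizations established in Sections 2 and 3; no new mathematical input is needed beyond careful composition and substitution. Concretely, I would obtain $X_1^D(\Z)^*$ by composing three maps whose effect on coordinates is already known: the parametrization of $Y_D^1(\Z)^*$ (the level-lowering argument at the end of Section 3, which shows every point of $Y_D^1(\Z)^*$ is a level-raised image of a point of $Y_1^1(\Z)$, together with Proposition \ref{X1D}), the two-piece description of $X_D^1(\Z)^*$ in terms of $Y_D^1(\Z)^*$ from Lemma \ref{D}, and the scaling bijection $\theta_D$ of Lemma \ref{thetaD}. Throughout I normalize to $u_j, v_j > 0$ using the symmetry $(x,y,z) \mapsto (-x,-y,z)$ of $Y_1^1(\Z)$.

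For the first assertion, I would start from the fact that every element of $Y_D^1(\Z)^*$ has the form $(u_jx \mp 3v_jy,\, u_jy \pm v_jx,\, z)$ for some $(x,y,z) \in Y_1^1(\Z)$ and some representation $D = u_j^2 + 3v_j^2$ with $u_j, v_j > 0$, and that Proposition \ref{X1D} pins membership in $Y_D^1(\Z)^*$ to the condition $(D, z, u_jy \pm v_jx) = 1$. Feeding this into Lemma \ref{D}, the first piece $(x',y',z') \mapsto (9y', x', 3z')$ sends such a point to $(9(u_jy \pm v_jx),\, u_jx \mp 3v_jy,\, 3z)$, while the second piece $(x',y',z') \mapsto (x', y'/3, z')$---available exactly when $3 \mid (u_jy \pm v_jx)$---sends it to $(u_jx \mp 3v_jy,\, (u_jy \pm v_jx)/3,\, z)$. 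Applying $\theta_D$, that is, multiplying every coordinate by $D$, then produces precisely the two displayed families defining $X^{D(j)}$. Taking the union over the $r_D$ representations of $D$ yields $X_1^D(\Z)^* = \bigcup_j X^{D(j)}$.

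For the second assertion, I would replace Lemma \ref{D} by Corollary \ref{9D}, which gives a bijection $\delta_D\colon S'_D \to X_{9D}^1(\Z)^*$ from the subset $S'_D = \{(x,y,z) \in Y_D^1(\Z)^* : 3 \nmid y\}$ via $(x,y,z) \mapsto (3x, y, z)$. Composing with $\theta_{9D}$ (multiplication by $9D$) and substituting the same parametrization of $Y_D^1(\Z)^*$---now subject to the extra constraint $3 \nmid (u_jy \pm v_jx)$ that defines $S'_D$---gives the single family $(27D(u_jx \mp 3v_jy),\, 9D(u_jy \pm v_jx),\, 9Dz)$, and the union over $j$ completes the description of $X_1^{9D}(\Z)^*$.

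The only real work is book-keeping: tracking the two sign choices $\pm$ through each composition, verifying that the $3 \mid (u_jy \pm v_jx)$ dichotomy in Lemma \ref{D} correctly splits $X_1^D(\Z)^*$ into its two displayed families---this hinges on the observation, recorded before the emptiness proposition for $Y_D^1(\Z)^*$, that $3 \nmid z$ on every point of $Y_D^1(\Z)$, so divisibility of the third coordinate by $3$ distinguishes the two pieces and guarantees the union is disjoint---and confirming that the $r_D$ representations of $D$ exhaust $Y_D^1(\Z)^*$. I expect no genuine obstacle; the content lives in the earlier lemmas, and the theorem is essentially their collation.
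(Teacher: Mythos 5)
Your proposal is correct and follows essentially the same route as the paper's proof: it assembles the theorem by combining the level-raising parametrization of $Y^1_D(\Z)^*$ (via Proposition \ref{X1D} and the level-lowering argument) with Lemma \ref{D} and $\theta_D$ for the first assertion, and with Corollary \ref{9D} and $\theta_{9D}$ for the second. Your added remarks --- the normalization $u_j, v_j > 0$ via $(x,y,z)\mapsto(-x,-y,z)$ and the observation that $3 \nmid z$ on $Y^1_D(\Z)$ justifies the disjointness of the two families --- are correct and in fact make explicit book-keeping points the paper leaves implicit.
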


\begin{proof}
Note that from Proposition \ref{X1D}, we have $Y^1_D(\Z)^* =
\bigcup_{j=1}^{r_D} \tilde{Y}_{D(j)}$ where
$$\tilde{Y}_{D(j)} = \{ ( u_j x \mp 3v_jy , u_jy \pm v_jx,z)  \mid (x,y, z) \in Y^1_1(\Z), (D,z, u_jy \pm v_jx) =1\}.$$
Now statement (2) follows from Corollary \ref{9D} via the map $\delta_D$.\\
For the statement (1),  we first get $X^1_D(\Z)^*$ from
$Y^1_D(\Z)^*$ by Lemma \ref{D}. We see that $X^1_D(\Z)^* =
\bigcup_{j=1}^{r_D} X_{D(j)}$, where $X_{D(j)}$ is given by
$$
\{ (9(u_jy \pm v_jx) , (u_jx \mp 3v_jy)  , 3z) \mid (x,y,z) \in
Y^1_1(\Z) , (D,z, u_jy \pm v_jx) =1 \}  \sqcup$$ $$\{ ((u_jx \mp
3v_jy),  \bigg(\frac{(u_jy \pm v_jx)}{3} \bigg) , z) \mid (x,y, z)
\in Y^1_1(\Z), 3 \mid (u_jy \pm v_jx), (D,z, u_jy \pm v_jx) =1 \}.$$

Finally, using the map $\theta_D$, we obtain $X^D_1(\Z)^* =
\bigcup_{j=1}^{r_D} X^{D(j)}$.

\end{proof}

\begin{rem}
We remark that starting from a point $(x_0,y_0,z_0) \in Y^1_1(\Z)$
it is not so easy to determine exactly how many points we get in
$X^1_D(\Z)^*$ using level raising maps (even in the case when $D=p$
a prime). \\
For instance, $(-1,1,1) \in Y_1^1(\Z)$ gives three points $(9,-5,3),
(27,1,3), (1,1,1) \in X_7^1(\Z)$, the point $(37,1,7) \in Y_1^1(\Z)$
gives only two points $(351,71,21), (71,13,7) \in X_7^1(\Z)$ and the
point $(20,18,7) \in Y_1^1(\Z)$ gives just one point $(144,94,21)
\in X_7^1(\Z)$. It is possible to show that if $3 \mid (u \pm v)$,
where $p=u^2+3v^2$, then one point in $Y^1_1(\Z)$ will give rise to
at most three points in $X^1_p(\Z)$. On the other hand if $3 \mid
v$, that is,  if $p$ is a prime expressible as $m^2 + 27 n^2$, then
from a point in $Y^1_1(\Z)$ we get either $1$ or $2$ or $4$ points
in $X^1_p(\Z)$. We remark that for a prime $p \equiv 1 \pmod{3}$,
the condition $3 \mid v$ is equivalent to $2$ being a cubic residue
modulo $p$(see \cite[Proposition 9.6.2]{IR}).

\end{rem}

\noindent One can write
$X^D_1(\Z)^*$ for general admissible $D$ in parametric form and
characterize those elements $(x_0,y_0,z_0) \in X^D_1(\Z)^*$ for
which the trinomial $X^3-z_0X+y_0$ is irreducible. Towards that, we
observe: \vskip 2mm

\begin{lemma}\label{irreducibility}
Let $f(X) = X^3-aX+b \in \Z[X]$ be a cubic polynomial whose
discriminant is a perfect square. If $(a,b)=d>1$ is cube-free and
for each prime $l$ such that $l^2|d$, we have $l^3 \nmid b$, then
$f(X)$ is irreducible.
\end{lemma}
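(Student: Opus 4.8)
The plan is to argue by contradiction: assume $f$ is reducible and derive a contradiction with the hypotheses on $d=(a,b)$ and on the discriminant. Since $f$ is monic with integer coefficients, reducibility over $\Q$ is equivalent, by the rational root theorem, to the existence of an integer root $r$; writing $f(X)=(X-r)\bigl(X^2+rX+(r^2-a)\bigr)$ gives the key relation $b=ra-r^3=r(a-r^2)$. The first step is to observe that every prime $\ell\mid d$ divides $r$: indeed $\ell\mid a$ and $\ell\mid b$, so reducing $b=ra-r^3$ modulo $\ell$ yields $0\equiv -r^3\pmod{\ell}$, whence $\ell\mid r$. I would then split into two cases according to whether $d$ is squarefree.

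In Case A some prime $\ell$ satisfies $\ell^2\mid d$. As $d$ is cube-free this forces $\ord_\ell(d)=2$, so $\ord_\ell(a)\ge 2$ and $\ord_\ell(b)\ge 2$, and the hypothesis $\ell^3\nmid b$ pins down $\ord_\ell(b)=2$ exactly. But from $b=r(a-r^2)$ with $\ord_\ell(r)\ge 1$ and $\ord_\ell(a-r^2)\ge\min(\ord_\ell(a),2\ord_\ell(r))\ge 2$, one gets $\ord_\ell(b)\ge 1+2=3$, a contradiction. Note this case uses only the cube-free and $\ell^3\nmid b$ hypotheses, not the discriminant.

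In Case B the integer $d$ is squarefree; fix any prime $\ell\mid d$. As above $\ord_\ell(r)\ge 1$ and $\ord_\ell(a-r^2)\ge 1$, so $\ord_\ell(b)\ge 2$, while $\ord_\ell(d)=1$ forces $\ord_\ell(a)=1$. Now I would compare $\ord_\ell$ of the two terms of $4a^3-27b^2=c^2$. For $\ell\ge 5$ one has $\ord_\ell(4a^3)=3$ and $\ord_\ell(27b^2)\ge 4$, so $\ord_\ell(c^2)=3$, which is impossible for a square; the prime $\ell=3$ is handled identically, since $\ord_3(27b^2)=3+2\ord_3(b)\ge 7>3=\ord_3(4a^3)$.

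The main obstacle is $\ell=2$. If $\ord_2(b)\ge 3$ the same parity argument applies ($\ord_2(4a^3)=5$ while $\ord_2(27b^2)\ge 6$, forcing the odd value $5$). The delicate case is $\ord_2(b)=2$, where the $2$-adic valuation of the discriminant is the \emph{even} number $4$ and parity alone fails. Here I would refine modulo $8$: writing $a=2a'$ with $a'$ odd and $b=4b''$ with $b''$ odd, one finds $4a^3-27b^2=16\,(2a'^3-27b''^2)$, and since $c^2$ has $2$-adic valuation $4$, the factor $2a'^3-27b''^2$ must itself be a perfect square. However, using $a'^3\equiv a'\pmod 8$ and $b''^2\equiv 1\pmod 8$ for odd integers, one computes $2a'^3-27b''^2\equiv 2a'-3\in\{3,7\}\pmod 8$, whereas every square is $\equiv 0,1,4\pmod 8$ --- the final contradiction. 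Exhausting all cases shows that no integer root can exist, so $f$ is irreducible.
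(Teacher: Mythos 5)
Your proof is correct. It overlaps with the paper's proof in one half and genuinely diverges in the other. When some $\ell^2\mid d$, your Case A is essentially the paper's argument in different clothing: the paper writes out the factorization $(X+r)(X^2+sX+t)$, shows $\ell\mid r$ and $\ell\mid t$, and contradicts $rt=\ell^2B$ with $(\ell,B)=1$; your valuation count $\ord_\ell(b)=\ord_\ell(r)+\ord_\ell(a-r^2)\ge 1+2=3$ against $\ord_\ell(b)=2$ is the same computation, streamlined through the single identity $b=r(a-r^2)$, and like the paper you correctly avoid the discriminant here. In the squarefree case the routes differ in structure: the paper never assumes a root, but instead uses the discriminant to show $\ell^2\nmid b$ for each prime $\ell\mid d$ and then concludes by Eisenstein at $\ell$, whereas you assume a root, deduce $\ord_\ell(b)\ge 2$ and $\ell\,\|\,a$, and contradict the squareness of $4a^3-27b^2$ directly. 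Both arguments hinge on the same arithmetic fact --- that $\ell\,\|\,a$, $\ell^2\mid b$ and $4a^3-27b^2$ a perfect square are incompatible --- but your contrapositive arrangement buys something concrete at the prime $2$: the paper's chain ``$l^3\mid c^2$, hence $l^2\mid c$, hence $l^4\mid c^2+27b^2=4a^3$, hence $l^2\mid a$'' breaks down at $l=2$, since $2^4\mid 4a^3$ only forces $2\mid a$, which is already known. Your delicate subcase $\ord_2(b)=2$, where you extract $c^2=16\,(2a'^3-27b''^2)$ and rule it out because $2a'-3\equiv 3,7\pmod 8$ while squares are $\equiv 0,1,4\pmod 8$, is precisely the configuration the paper's step fails to exclude, so on this point your write-up is tighter than the paper's. (The paper's version can also be patched without your mod-$8$ computation: $\ord_2(c^2+27b^2)$ is always even --- equal to the minimum of the two even valuations when they differ, and equal to $2m+2$ when both are $2m$, since $c_1^2+27b_1^2\equiv 4\pmod 8$ for odd $c_1,b_1$ --- whereas $\ord_2(4a^3)=5$ is odd.)
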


\begin{proof}
If $d>1$ is square-free, then for any prime divisor $l$  of $d$,  we have $l^2 \nmid b$. If not, then this implies $l \mid \mid a$ and $l^2 \mid b$. Hence, $ l^3 \mid 4a^3-27b^2 =c^2$ and hence $l^2 \mid c$, which implies $l^4 \mid c^2+27b^2 = 4a^3$, which implies $l^2 \mid a$, contradiction. Thus $f(X)$ satisfies Eisenstein criterion for the prime $l$ and hence irreducible. \\

If $l^2|d$ for some prime $l$, then $a=l^2A,
b= l^2B$ with $(l,B)=1$. If
$$X^3-aX+b = X^3 - l^2Ax + l^2B = (X+r)(X^2+sX+t),$$
then $rt = l^2B$, $s = -r$ and $r^2 -t = l^2A$. Now $l$ divides $r$
or $t$; if it divides only one of them, we have a contradiction from
$r^2-t= l^2A$. Hence $l |r,  l |t$. So, $t = r^2-l^2A \equiv 0$ mod
$l^2$. But $rt= l^2B$ with $(l,B)=1$ implies $l^2 \nmid t$ which is
a contradiction. Therefore, $X^3-aX+b$ is irreducible.
\end{proof}

\begin{theorem}\label{generalthm}
For an admissible integer $D>1$, the irreducible trinomials (up to
rational equivalence) of the form $X^3-aX+b$ with $(a,b)=D$ and
discriminant perfect square are given by $X^3 - zX +y$, where $y$
and $z$ are integers such that $(x,y,z) \in X^D_1(\Z)^*$. Note that discriminant of the polynomial $X^3-zX+y$ is $x^2$.
\end{theorem}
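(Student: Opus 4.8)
The plan is to realize Theorem \ref{generalthm} as the assembly of two ingredients already at our disposal: the essentially tautological correspondence $(x,y,z) \mapsto X^3 - zX + y$ between integral points of $X_1$ and cubic trinomials, together with the irreducibility criterion of Lemma \ref{irreducibility}, which is exactly what the hypothesis $D>1$ is designed to feed.

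First I would verify that every point produces an admissible irreducible trinomial. Let $(x,y,z) \in X_1^D(\Z)^*$ and put $a = z$, $b = y$. The defining equation $x^2 + 27y^2 = 4z^3$ rearranges to $4a^3 - 27b^2 = x^2$, so $\mathrm{Disc}(X^3 - aX + b) = x^2$ is a perfect square, and it is nonzero since $xyz \neq 0$ forces $x \neq 0$; this is condition (1). From $(y,z) = D$ we read off $(a,b) = D$, and the admissibility of $D$ gives that $D$ is cube-free, while the defining property $\ell^2 \mid D \Rightarrow \ell^3 \nmid y$ of the starred set is precisely the remaining half of condition (2). Since $D > 1$, Lemma \ref{irreducibility} now applies and shows $X^3 - zX + y$ is irreducible. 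Combined with Theorem \ref{mainthm}, which writes $X_1^D(\Z)^*$ in closed parametric form, this yields an explicit list.

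For the converse I would start from an irreducible trinomial $f = X^3 - aX + b$ with perfect square discriminant and gcd $(a,b) = D$, normalized as in conditions (1)--(2), and produce a point. Writing $\mathrm{Disc}(f) = c^2$, one first checks $abc \neq 0$: an irreducible cubic over a field of characteristic zero is separable, so $c \neq 0$; moreover $b = 0$ would factor $f = X(X^2 - a)$, while $a = 0$ would give $\mathrm{Disc}(f) = -27 b^2 \le 0$, each incompatible with $f$ being irreducible with nonzero square discriminant. Then $(c, b, a)$ satisfies $c^2 + 27 b^2 = 4 a^3$ with $cba \neq 0$ and $(b, a) = D$, hence lies in $X_1^D(\Z)$, and condition (2) furnishes the starred condition, so $(c,b,a) \in X_1^D(\Z)^*$ with associated trinomial $f$ itself.

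The only genuinely delicate point is the phrase ``up to rational equivalence'': I would invoke the normalization discussed in the introduction, namely that conditions (1)--(2) single out a representative in each equivalence class $\{X^3 - q^2 a X + q^3 b : q \in \Q^\ast\}$ with a well-defined gcd $D = (a,b)$, so that grouping by $D$ is compatible with the equivalence and with the harmless sign ambiguities $c \mapsto -c$ (which does not change the trinomial) and $b \mapsto -b$ (the case $q = -1$). I do not expect any single computation to be the obstacle, since every verification above is immediate from the defining equation; rather, the substantive content has been isolated upstream in Lemma \ref{irreducibility}, and the role of the hypothesis $D > 1$ is exactly to make that lemma applicable. This is also why the qualitatively different case $D = 1$, where reducible trinomials and the trivial solutions of $X_1$ genuinely intervene, is excluded here and treated separately.
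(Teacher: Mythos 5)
Your proposal is correct and follows essentially the same route as the paper's own proof: the tautological correspondence $(x,y,z)\leftrightarrow X^3-zX+y$ in both directions, the normalization under rational equivalence to secure the starred condition, and Lemma \ref{irreducibility} (enabled precisely by $D>1$) for irreducibility. Your version is in fact slightly more careful than the paper's, which silently assumes $abc\neq 0$ in the converse direction where you verify it explicitly.
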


\begin{proof}
For $(x,y,z) \in X^D_1(\Z)^*$, $X^3-zX+y \in \Z[X]$ has discriminant
perfect square and $(y,z)=D$.
On the other hand, suppose $X^3-aX+b$ has discriminant $c^2 = 4a^3-27b^2$ and $(a,b)=D$. Thus $(c,b,a) \in X^D_1(\Z)$. Since we are considering polynomials up to rational equivalence, we may assume that if for any prime $\ell$, $\ell^2 \mid a$ then $\ell^3 \nmid b$. This is equivalent to $(c,b,a) \in X^D_1(\Z)^*$.\\
Since $D>1$, irreducibility of $X^3-zX+y$ follows from Lemma \ref{irreducibility}.

\end{proof}

\begin{rem}
(i) When $(a,b)=1$, the above theorem is not valid as some of the
trinomials coming from $X^1_1(\Z)$ are reducible. The
irreducible ones are determined in the main theorem below. \\
(ii)
We do not claim that these polynomials are rationally inequivalent.
In fact the four points  $(\pm x, \pm y , z) \in X^D(\Z)^*$ generate
only one trinomial up to rational equivalence; so even though we
have listed all the trinomials, we have listed each multiple times -
once for each occurrence of a point in $X^D(\Z)^*$.
\end{rem}

\vskip 3mm

\noindent Using the parametrization of $Y^1_1(\Z)$ and combining
Theorems \ref{X91}, \ref{X11}, \ref{mainthm} and \ref{generalthm}, we may now write down all irreducible
trinomials $X^3-aX+b \in \Z[X]$ whose discriminant is a perfect
square (up to rational equivalence).

\begin{theorem}\label{mainthm2}
Up to rational equivalence, any irreducible trinomial whose
discriminant is a perfect square is $X^3-aX+b$ where $gcd(a,b)$ must
be $D$ or $9D$, where $3 \nmid D$ and each prime divisor of $D$ is
congruent to $1$ modulo $3$. To describe all of them, write
$D=u_j^2+3v_j^2$ with $u_j, v_j>0$ and $(u_j, 3v_j)=1$, $j \in \{1,
\dots, r_D \}$. Let $s$ and $t$ be co-prime integers with $3 \nmid
(s+t)$. Up to rational equivalence, the irreducible trinomials of
the form $X^3-aX+b \in \Z[X]$ whose discriminant is a perfect square
are given as follows:\\
(i) Polynomials with $(a,b)=1$ are given by
$$f_{(s,t)}(X)=X^3-3(s^2-st+t^2)X \pm ((s+t)^3-9st^2).$$
(ii) Polynomials with $(a,b)=9$ are given by
$$h_{(s,t)}(X)= X^3 - 9(s^2-st+t^2)X +9(s^3-3s^2t+t^3) .$$
(iii) Polynomials with $(a,b)=D$ are given by

\noindent (a)
 $f_{D, j, \pm ,s,t, 1}(X)= X^3 - aX + b$ where
$$a= 3D(s^2-st+t^2), b = D(u_j(s+t)(s-2t)(2s-t) \mp 9v_jst(s-t))$$
if $\big(D, s^2-st+t^2, 3u_jst(s-t) \pm v_j(s+t)(s-2t)(2s-t)
\big)=1$.\\
        (b) $f_{D,j, \pm, s,t,2}(X)=X^3 - aX+b$ where
$$a = 3D(s^2-st+t^2), b = D(u_j((s+t)^3-9st^2) \mp
3v_j(s^3-3s^2t+t^3))$$
        if $\big(D, s^2-st+t^2,  u_j(s^3-3s^2t+t^3) \pm v_j((s+t)^3-9st^2)
        \big)=1$.\\
        (c) $g_{D,j, \pm, s,t, 1}(X)=X^3-aX+b$ where
$$a = D(s^2-st+t^2), b= D(u_jst(s-t) \pm
\frac{v_j}{3}(s+t)(s-2t)(2s-t))$$ if $ 3 \mid v_j$, and
         $\big(D, s^2-st+t^2, 3u_jst(s-t) \pm v_j(s+t)(s-2t)(2s-t) \big) =1$. \\
        (d) $ g_{D,j, \pm ,s,t, 2}(X)= X^3-aX+b$, where
        $$a = D(s^2-st+t^2), b = \frac{D}{3}(u_j(s^3+t^3-3s^2t) \pm
        v_j((s+t)^3-9st^2))$$
if $3 \nmid v_j$ (which means $3 \mid u_j \pm v_j$) and $\big(D,
s^2-st+t^2, u_j(s^3+t^3-3s^2t) \pm v_j((s+t)^3-9st^2) \big)=1$. We choose $+$ (resp. $-$) sign if and only if $ 3 \mid u_j+v_j$ (resp. $3 \mid u_j-v_j$).\\

\vskip 2mm

\noindent (iv) Polynomials with $(a,b)=9D$ are given by

(a) $h_{9D,j, \pm,s,t, 1}(X) = X^3-aX+b$, where
$$a = 9D(s^2-st+t^2), b = 9D(3u_jst(s-t) \pm v_j(s+t)(2s-t)(s-2t))$$
if $3 \nmid v_j$, and $\big( D, s^2-st+t^2, 3u_jst(s-t) \pm
v_j(s+t)(2s-t)(s-2t) \big)=1$.\\
(b) $h_{9D,j,
\pm, s,t, 2}(X) = X^3-aX+b$, where
$$a = 9D(s^2-st+t^2), b = 9D(u_j(s^3-3s^2t+t^3) \pm v_j
((s+t)^3-9st^2))$$ if $3 \nmid (u_j \pm v_j)$, and $\big( D,
s^2-st+t^2, u_j(s^3-3s^2t+t^3) \pm v_j ((s+t)^3-9st^2) \big)=1$.\\

\end{theorem}

\begin{proof}
The theorem is a consequence of Theorem \ref{X11}, Theorem \ref{X91}
and Theorem \ref{generalthm} as we explicitly write down points
$(x,y,z) \in X^D(\Z)^*$ using the parametrization of $Y^1_1(\Z)$ as
given in Theorem 2.8 (\cite{Cohen2}[Proposition 14.2.1(2)]) and
Theorem \ref{mainthm}.

\noindent (i) If $(a,b)=1$, then from Theorem \ref{X11}, we see the corresponding trinomials are $X^3-(s^2-st+t^2)X + st(s-t)$, $X^3-3 (s^2-st+t^2)X + (s+t)(2s-t)(s-2t)$ and
$X^3-3(s^2-st+t^2)X \pm ((s+t)^3-9st^2)$. Note that  $X^3-(s^2-st+t^2)X + st(s-t)$
has a root $t$ and hence is never irreducible. Also, $X^3-3 (s^2-st+t^2)X + (s+t)(2s-t)(s-2t)$
has a root $(s+t)$ and hence is never irreducible.

Note that $(s+t)^3-9st^2$ and $st(s-t)$ are always odd (as both $s$ and $t$ can not be even), hence
$$X^3-3 st(s-t)X \pm ((s+t)^3-9st^2) \equiv X^3+X+1 \pmod{2},$$
as a consequence we see that $X^3-3 st(s-t)X + (s+t)^3-9st^2$ is irreducible.

\vskip 2mm

\noindent (ii) The polynomials are obtained from Theorem \ref{X91}. The statement regarding irreducible polynomials follows
from Theorem \ref{generalthm}.

\vskip 2mm

\noindent (iii) Combining the parametrization of $Y^1_1(\Z)$ as
given in \cite{Cohen2}[Proposition 14.2.1(2)] and Theorem
\ref{mainthm} we get $X_1^D(\Z)^* = \cup_{j=1}^{r_D} X^{D(j)}$, where
$X^{D(j)}$ is the set given explicitly as

$$
 \big\{ (9D(3u_jst(s-t) \pm v_j(s+t)(s-2t)(2s-t)) , D(u_j(s+t)(s-2t)(2s-t) \mp 9v_jst(s-t))  , 3D(s^2-st+t^2))
 \mid$$
$$  \big(D, s^2-st+t^2, 3u_jst(s-t) \pm v_j(s+t)(s-2t)(2s-t) \big)=1 \big\}
\cup $$
$$\big\{ (\pm 9D(u_j(s^3-3s^2t+t^3) \pm v_j((s+t)^3-9st^2)) ,\pm D(u_j((s+t)^3-9st^2) \mp 3v_j(s^3-3s^2t+t^3)) , 3D(s^2-st+t^2))
\mid $$
  $$\big(D, s^2-st+t^2,  u_j(s^3-3s^2t+t^3) \pm v_j((s+t)^3-9st^2) \big)=1 \big\} \cup
  $$
  $$\big \{ (D(u_j(s+t)(s-2t)(2s-t)  \mp 9v_jst(s-t) ), D(u_jst(s-t) \pm \frac{v_j}{3}(s+t)(s-2t)(2s-t)) , D(s^2-st+t^2) ) \mid
  $$
$$3 \mid v_j, \big(D, s^2-st+t^2, 3u_jst(s-t) \pm v_j(s+t)(s-2t)(2s-t) \big) =1 \big \}
\cup $$
 $$\big \{  \pm (D(u_j((s+t)^3-9st^2)  \mp 3v_j(s^3+t^3-3s^2t)), \pm \frac{D}{3}(u_j(s^3+t^3-3s^2t) \pm v_j((s+t)^3-9st^2) ) , D(s^2-st+t^2) ) \mid
 $$
 $$3 \mid u_j \pm v_j, \big(D, s^2-st+t^2, u_j(s^3+t^3-3s^2t) \pm v_j((s+t)^3-9st^2) \big)=1 \big \}.
$$

\vskip 2mm

\noindent We remark that $$ 3 \mid 3u_jst(s-t) \pm
v_j(s+t)(s-2t)(2s-t) \Leftrightarrow 3 \mid v_j$$ and
$$3 \mid
u_j(s^3+t^3-3s^2t) \pm v_j((s+t)^3-9st^2) \Leftrightarrow 3 \mid
(u_j \pm v_j).$$ Since $(u_j, 3v_j)=1$, we get $3 \mid (u_j \pm
v_j)$ if and only if $3 \nmid v_j$. \\Also note that the points
$(\pm x, \pm y, z) \in X_1^D(\Z)^*$ give rise to only one trinomial up
to rational equivalence; we only consider the expressions for
$x,y,z$ while writing down the polynomials $f_{D,j,\pm,s,t,2}(X)$ and
$g_{D,j,\pm,s,t,2}(X)$. The statement regarding irreducibility of the
trinomials follows from Theorem \ref{generalthm}.

\vskip 2mm

\noindent (iv) Combining the parametrization of $Y^1_1(\Z)$ as given
in \cite{Cohen2}[Proposition 14.2.1(2)] and Theorem \ref{mainthm} we
get $X_1^{9D}(\Z)^* = \cup_{j=1}^{r_D} X^{9D(j)}$, where $X^{9D(j)}$
is the set

$$
 \big\{ (27D(u_j(s+t)(2s-t)(s-2t) \mp 9v_jst(s-t)) , 9D(3u_jst(s-t) \pm v_j(s+t)(2s-t)(s-2t))  , 9D(s^2-st+t^2)) \mid
 $$
 $$3 \nmid v_j \big( D, s^2-st+t^2, 3u_jst(s-t) \pm v_j(s+t)(2s-t)(s-2t) \big)=1 \big\} \cup $$
 $$\big\{ ( \pm 27D(u_j((s+t)^3-9st^2) \mp 3v_j(s^3-3s^2t+t^3)) , \pm 9D(u_j(s^3-3s^2t+t^3) \pm v_j ((s+t)^3-9st^2))  , 9D(s^2-st+t^2))
 \mid$$
 $$3 \nmid (u_j \pm v_j),  \big( D, s^2-st+t^2, u_j(s^3-3s^2t+t^3) \pm v_j ((s+t)^3-9st^2) \big)=1
 \big\}.$$

\vskip 2mm

\noindent We remark that $$3 \nmid (3u_jst(s-t) \pm
v_j(s+t)(2s-t)(s-2t)) \Leftrightarrow 3 \nmid v_j$$ and
$$3 \nmid
(u_j(s^3-3s^2t+t^3) \pm v_j ((s+t)^3-9st^2)) \Leftrightarrow 3 \nmid
(u_j \pm v_j).$$ Also note that the points $(\pm x, \pm y, z) \in
X_1^{9D}(\Z)^*$ give rise to only one trinomial up to rational
equivalence; we only consider the expressions for $x,y,z$ while
writing down the polynomials $h_{D,j,\pm,s,t,2}(X)$. The statement
regarding irreducible trinomials follows from Theorem
\ref{generalthm}.

\end{proof}
\vskip 5mm

\section{Cube-free natural numbers expressible as sums of two rational cubes}

As an accidental byproduct of our results above, we can partially
solve a classical problem.  A
classical, open problem (see \cite{Sel})  in number theory asks for
a classification of all cube free natural numbers which can be expressed as sums
of cubes of two rational  numbers.\\
We give an alternate description of these numbers in terms of
non-trivial integral points of $X_1(\Z)$. Let $n$ be a cube-free natural number. Observe that the affine
curve $x^3+y^3=n$ is isomorphic to the affine curve $X^2=4Z^3-27n^2$
given by the following change of variables:
\begin{equation}\label{xy}
x = \frac{9n+X}{6Z} \text{  and  } y = \frac{9n-X}{6Z},
\end{equation}
and
\begin{equation}\label{XY}
X= \frac{3n}{x+y} \text{  and } Y = 9n \frac{x-y}{x+y}.
\end{equation}
Let us denote by $E_n$ the elliptic curve whose Weierstrass equation
is given by $Y^2=4X^3-27n^2$. (We remark that the elliptic curve
$E_n$ is isomorphic to $E_{nm^3}$ over $\Q$.) Then we can identify
(the projectivization of) $x^3+y^3=n$ with the elliptic curve $E_n$.
As a consequence, we see that $n$ can be written as sum of two
rational cubes iff $E_n(\Q)$ is non-trivial. It is well known (see
\cite{DV}) that $E_n(\Q)_{\tors} = \{ \OO \}$ if $n>2$. Thus, a cube
free natural number $n>2$ is can be expressed as a sum of two
rational cubes iff $\mathrm{rk}(E_n(\Q)) >0$. The standard approach to
study this problem is via the theory of (mock) Heegner points. But,
in what follows, we relate this to the integral solutions of the
equation $X^2+27Y^2=4Z^3$.

\begin{defn}\label{niceset}
Let $S$ denote the set of cube-free natural numbers given by
$$S= \{ n \mid n>2,  (a, nm^3, b) \in X_1^D(\Z)^* \text{ for some } a,b, m \in \Z \text{ and for some admissible } D \}.$$
\end{defn}

\begin{theorem}\label{sumofcubes}
Let $n>2$ be a cube-free natural number. Then $n$ is a sum of cubes
of two rational numbers iff $n \in S$, where $S$ is as defined
above.
\end{theorem}

\begin{proof}

Let $n \in S$. Then there exist integers $a,b,m$ such that
$(a,nm^3,b) \in X_1^{non-triv}(\Z)$ - or, equivalently - $(a,b)$
satisfies the equation $X^2=4Z^3 - 27(nm^3)^2$. Then from
\eqref{xy}, it follows that $\Big( \frac{9nm^3+a}{6b},
\frac{9nm^3-a}{6b} \Big)$ satisfies the equation $x^3+y^3=nm^3$. As
a consequence, we have
$$n = \Big( \frac{9nm^3+a}{6bm} \Big)^3 +  \Big( \frac{9nm^3-a}{6bm} \Big)^3.$$\\

Conversely, let $n$ be a cube-free natural number and suppose $q_1 =
\frac{a_1}{db_1}$ and $q_2 = \frac{a_2}{db_2}$ (with $(a_1, db_1) =
(a_2, db_2) = (b_1, b_2)=1$) are two rational numbers such that
$q_1^3 + q_2^3 = n$. Then
$$(a_1b_2)^3+(a_2b_1)^3 = nd^3b_1^3b_2^3.$$
If $\ell \mid b_1$, then $\ell^3 \mid b_1^3$, which implies $\ell^3 \mid (nd^3b_2^3-a_2^3)b_1^3 = (a_1b_2)^3$ or equivalently $\ell \mid a_1b_2$. This is impossible as $(a_1, b_1) = (b_1, b_2)=1$. A similar argument shows that $\ell \mid b_2$ is also impossible. We conclude that $b_1b_2 = \pm 1$. If necessary, changing the signs of $a_1$ and $a_2$, we may assume that $b_1=b_2=1$. \\
Since $\big(\frac{a_1}{d}, \frac{a_2}{d} \big)$ satisfies $x^3+y^3 =n$, from \eqref{XY}, we see that $\big( 9n \frac{a_1-a_2}{a_1+a_2}, \frac{3nd}{a_1+a_2} \big)$ (note that $a_1+a_2 \neq 0$) satisfies $X^2=4Z^3-27n^2$. Hence
$$\Big( 9n \frac{a_1-a_2}{a_1+a_2},n, \frac{3nd}{a_1+a_2} \Big) \in X_1(\Q).$$
As a consequence, we see that  $\big( 9n
\frac{a_1-a_2}{a_1+a_2}m^3,nm^3, \frac{3nd}{a_1+a_2}m^2 \big) \in
X_1(\Q)$, for any natural number $m$. Since $a_1^3+a_2^3 = nd^3$, it
follows that $(a_1+a_2) \mid nd^3$. Taking $m=d$, we obtain
$$\Big( 9(a_1-a_2) \frac{nd^3}{a_1+a_2},nd^3, 3 \frac{nd^3}{a_1+a_2} \Big) \in X_1(\Z).$$
Now note that $a_1 \neq a_2$ as $n \neq 2$. Thus $\Big( 9(a_1-a_2)
\frac{nd^3}{a_1+a_2},nd^3, 3 \frac{nd^3}{a_1+a_2} \Big) \in
X_1^{non-triv}(\Z)$, here $X_1^{non-triv}(\Z) = \{ (x, y, z) \in \Z^3 \mid x^2+ 27y^2 =4z^3, xyz \neq 0 \}$, which are the complements of the trivial integrals zeros $X_1^{\text{triv}}(\Z)$. Observe that if $(x,y,z) \in X_1^{non-triv}(\Z)$
and $\ell^3 \mid y$ and $\ell^2 \mid z$, then $\ell^3 \mid x$ and
$\big( \frac{x}{\ell^3}, \frac{y}{\ell^3}, \frac{z}{\ell^2} \big)
\in X_1^{non-triv}(\Z)$. As a consequence we see that if $\ell \mid d$ is a prime such that
$\ell^2 \mid 3 \frac{nd^3}{a_1+a_2}$, then $\ell^3 \mid 9(a_1-a_2)
\frac{nd^3}{(a_1+a_2)}$ and hence $\Big( 9(a_1-a_2)
\frac{nd^3}{\ell^3(a_1+a_2)},\frac{nd^3}{\ell^3}, 3
\frac{nd^3}{\ell^2(a_1+a_2)} \Big) \in X_1^{non-triv}(\Z)$.
Proceeding in this manner, we see that there exists a largest
natural number $k$ such that $k \mid d$ and
$$\Big( 9(a_1-a_2) \frac{nd^3}{k^3(a_1+a_2)},\frac{nd^3}{k^3}, 3 \frac{nd^3}{k^2(a_1+a_2)} \Big):=(a,nm^3,b) \in X_1^{non-triv}(\Z).$$
Note that $(a,nm^3,b) \in X_1^D(\Z)$ for $D= gcd(nm^3,b)$. If for a
prime $\ell$, $\ell^3 \mid D$, then we see that $\Big( 9(a_1-a_2)
\frac{nd^3}{(k\ell)^3(a_1+a_2)},\frac{nd^3}{(k\ell)^3}, 3
\frac{nd^3}{(k\ell)^2(a_1+a_2)} \Big) \in X_1^{non-triv}(\Z)$, which
contradicts the maximality of $k$. Hence $D$ is cube free.\\
Similarly, if $\ell^3 \mid nm^3$ and $\ell^2 \mid b$, then again
$\Big( 9(a_1-a_2)
\frac{nd^3}{(k\ell)^3(a_1+a_2)},\frac{nd^3}{(k\ell)^3}, 3
\frac{nd^3}{(k\ell)^2(a_1+a_2)} \Big) \in X_1^{non-triv}(\Z)$, which
contradicts the maximality of $k$. Thus $(a,nm^3,b) \in X_1^D(\Z)^*$
for an admissible $D$.
\end{proof}
\vskip 3mm

\noindent In order to deduce that certain primes are expressible as
sums of two rational cubes, we recall that $(9(s^3-3s^2t+t^3),
(s+t)^3-9st^2 , 3(s^2-st+t^2) ) \in X^1_1(\Z)$ for any co-prime
integers $s,t$ with $3 \nmid (s+t)$. As a consequence, we obtain:

\begin{corollary}\label{pm1mod9assumofcubes}
Let $p \equiv \pm 1 \pmod{9}$ be a prime and $m$ be an integer such
that $pm^3=(s+t)^3-9st^2$ for some co-prime integers $s$ and $t$. Then $p$ can be written as a sum of two
rational cubes. Explicitly,
$$p= \Big( \frac{s^3+t^3-3st^2}{m(s^2-st+t^2)} \Big)^3 +  \Big( \frac{3st(s-t)}{m(s^2-st+t^2)} \Big)^3.$$
\end{corollary}

\begin{rem}
There are infinitely many primes which satisfy the hypotheses of
corollary \ref{pm1mod9assumofcubes}; see \cite[Theorem 1.1]{HBM}. By
work of Stag\'e \cite{Sta}, we know that odd primes $p$ which are
congruent to $2,5$ modulo $9$ cannot be expressed as a sum of two
rational cubes. Recent work of Dasgupta and Voight \cite{DV} shows
that primes $p$ congruent to $4$ or $7$ modulo $9$ can be expressed
as a sum of rational cubes if $3$ is not a cubic residue modulo $p$.
Rodriguez-Villegas and Zagier \cite{RVZ}  - assuming the truth of
the BSD conjecture - give a criterion to decide whether a prime $1$
modulo $9$ can be expressed as a sum of two rational cubes. The
authors do not know of any previous unconditional result describing
when primes congruent to $\pm 1$ modulo $9$ are expressible as sums
of two rational cubes.
\end{rem}
\begin{rem}
There are $49$ primes less than $2000$ which are congruent to $1$
modulo $9$ and, by the result of  \cite{RVZ}, we expect that $22$ of
them are expressible as sums of two rational cubes. If we vary
$(s,t,m) \in \Z^3$ with $0 < |s|, |t|, m  < 1000$, $(s,t)=1$, then all of these $22$ primes satisfy
$pm^3=(s+t)^3-9st^2$ for some choice of $(s,t,m)$. There are
$14$ primes less than $500$ which are congruent to $-1$ modulo $9$
and all of  these $14$ primes satisfy
$pm^3=(s+t)^3-9st^2$ for some choice of $(s,t,m)$ with $(s,t,m) \in \Z^3$ with $0 < |s|, |t|, m  < 1000$.
There are $50$ primes less than $2000$ which are congruent to $-1$
modulo $9$, at least $34$ out of these $50$ primes satisfy
$pm^3=(s+t)^3-9st^2$ for some choice of $(s,t,m)$.
All of these were verified using SAGE \cite{SAGE}.

From the above data, we may hazard a guess that there are infinitely
many primes congruent to $1$ (resp. $-1$) modulo $9$ which satisfy
the condition of Corollary \ref{pm1mod9assumofcubes}.

Further, one can verify that if a tuple $(a, nm^3,b) \in
X^D_1(\Z)^*$, then $(D,m)=1$. Since elements of $X^D_1(\Z)^*$ are of the form $(Dx,Dy,Dz)$, it follows that $D \mid nm^3$ and hence
$D \mid n$. Now, for a prime $p$ which is congruent to $-1$ modulo
$9$, if $(a, nm^3,b) \in X^D_1(\Z)^*$ then it follows that $D=1$, as
$X^p_1(\Z)^*=\emptyset$. As a consequence $p \equiv -1 \pmod{9}$ can
be written as sum of two rational cubes if and only if there exists
integers $s,t,m $ with $(s,t)=1$ for which at least one of the
following condition holds:
\begin{enumerate}
\item $pm^3 = (s+t)^3-9st^2$,
\item $pm^3= st(s-t).$
\end{enumerate}
Note that as $(s+t)(2s-t)(s-2t)= 2(s+t)^3 - 9st(s+t) \equiv \pm 2
\pmod{9}$, we did not include it in the list. We could not find any
tuple $(s,t,m) \in \Z^3$ such that $pm^3= st(s-t)$. This leads us to
believe that $p \equiv -1 \pmod{9}$ is expressible as sum of
rational cubes iff it satisfies the condition of Corollary
\ref{pm1mod9assumofcubes}.
\end{rem}
\vskip 5mm

\noindent {\bf Acknowledgments.}\\
It is a pleasure to thank the referee for her/his suggestions which
helped us improve the presentation of the results. The referee's
detailed comments also helped us in correcting some minor errors and
the language also at various places. We are indebted to the referee
for also pointing out the primes $883$ and $937$ which correspond to
the triples $(s,t,m) = (14, 195, 17)$ and $(s,t,m) = (8, 203, 19)$
in Remark 5.5. Finally, we also thank S. Panda and P. Shingavekar
for pointing out that the primes $701$ and $1151$ which correspond
to $(77,1299,127)$ and $(1137,-250,37)$ respectively. \vskip 5mm

\vskip 5mm

\noindent {\bf Dipramit Majumdar}, Department of Mathematics, Indian
Institute of Technology Madras,
Chennai 600036, India (dipramit@gmail.com) \\

\noindent {\bf B. Sury}, Stat-Math Unit, Indian Statistical
Institute, 8th Mile Mysore Road, Bangalore 560059, India
(surybang@gmail.com)

\end{document}